\newtheorem{thm}{Theorem}[section]
\newtheorem{cor}[thm]{Corollary}
\newtheorem{lem}[thm]{Lemma}
\theoremstyle{definition}
\newtheorem{defn}[thm]{Definition}
\theoremstyle{remark}
\newtheorem{rem}[thm]{Remark}
\newtheorem{qu}[thm]{Question}
\numberwithin{equation}{section}
\newtheoremstyle{dotless}{}{}{}{}{\bfseries}{}{ }{}
\theoremstyle{dotless}
\newtheorem*{repthm}{Theorem}
\newtheorem*{repcor}{Corollary}
\newtheorem*{replem}{Lemma}
\def\N{{\mathbb N}}
\def\R{{\mathbb R}}
\newcommand{\ra}{\rightarrow}
\begin{document}
\title {Hyperbolic quasi-geodesics in CAT(0) spaces}
\author{Harold Sultan}
\address{Department of Mathematics\\Columbia University\\
New York\\NY 10027}
\email{HSultan@math.columbia.edu}
\date{\today}

\begin{abstract}
We prove that in CAT(0) spaces a quasi-geodesic is Morse if and only if it is contracting.  Specifically, in our main theorem we prove that for $X$ a CAT(0) space and $\gamma \subset X$ a quasi-geodesic, the following four statements are equivalent:  (i) $\gamma$ is Morse, (ii) $\gamma$ is (b,c)--contracting, (iii) $\gamma$ is strongly contracting, and (iv) in every asymptotic cone $X_{\omega},$ any two distinct points in the ultralimit $\gamma_{\omega}$ are separated by a cutpoint.  As a corollary, we provide a converse to the usual Morse stability lemma in the CAT(0) setting.  In addition, as a warm up we include an alternative proof of the fact, originally proven in Behrstock-Dru\c tu \cite{behrstockdrutu}, that in CAT(0) spaces Morse quasi-geodesics have at least quadratic divergence.
\end{abstract}
\maketitle

\tableofcontents

\section{Introduction and Overview} \label{sec:intro}
In the course of studying metric spaces one is frequently interested in families of geodesics which admit \emph{hyperbolic type} properties, or properties exhibited by geodesics in hyperbolic space which are not exhibited by geodesics in Euclidean space.  In the geometric group theory literature there are various well studied examples of such hyperbolic type properties including being Morse, being contracting, having cutpoints in the asymptotic cone, and having at least quadratic divergence.  Specifically, such studies have proven fruitful in analyzing right angled Artin groups \cite{behrstockcharney}, Teichm\"uller space \cite{behrstock,brockfarb,brockmasur,bmm,mosher}, the mapping class group \cite{behrstock}, CAT(0) spaces \cite{behrstockdrutu,bestvina,charney}, and Out($F_{n}$) \cite{algomkfir} amongst others (See for instance \cite{drutumozessapir,drutusapir,kapovitchleeb,osin,mm1}). 

A Morse geodesic $\gamma$ is defined by property that all quasi-geodesics $\sigma$ with endpoints on $\gamma$ remain within a bounded distance from $\gamma.$  A  strongly contracting geodesic has the property that metric balls disjoint from the geodesic have nearest point projections onto the geodesic with uniformly bounded diameter.   The \emph{divergence} of a geodesic measures the inefficiency of detour paths.  More formally, divergence along a geodesic is defined as the growth rate of the length of detour paths connecting sequences of pairs of points on a geodesic, where the distance between the pairs of points is growing linearly while the detour function is forced to avoid linearly sized metric balls centered along the geodesic between the pairs of points.    

It is an elementary fact that in hyperbolic space all quasi-geodesics are Morse, strongly contracting, and have exponential divergence.  On the other end of the spectrum, in product spaces such as Euclidean spaces of dimension two and above, there are no Morse or strongly contracting quasi-geodesics, and all quasi-geodesics have linear divergence.  Relatedly, there are no cutpoints in any asymptotic cones of product spaces, whereas all asymptotic cones of a $\delta$-hyperbolic spaces are $\R$-trees, and hence any two distinct points are separated by a cutpoint.  

In this paper we will explore the close relationship between the aforementioned hyperbolic type properties of quasi-geodesics in CAT(0) spaces.  The following theorem is a highlight of the paper:
\begin{repthm} $\textbf{\ref{thm:main}.}$ \emph{(Main Theorem)
Let $X$ be a CAT(0) space and $\gamma \subset X$ a quasi-geodesic.  Then the following are equivalent:
\begin{enumerate}
\item $\gamma$ is (b,c)--contracting,
\item $\gamma$ is strongly contracting,
\item $\gamma$ is Morse, and
\item In every asymptotic cone $X_{\omega},$ any two distinct points in the ultralimit $\gamma_{\omega}$ are separated by a cutpoint. 
\end{enumerate}
In particular, any of the properties listed above implies that $\gamma$ has at least quadratic divergence.}
\end{repthm}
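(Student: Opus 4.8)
The plan is to prove the four conditions equivalent by establishing the cycle $(1)\Rightarrow(2)\Rightarrow(3)\Rightarrow(4)\Rightarrow(1)$, and then to deduce at least quadratic divergence directly from strong contraction, i.e.\ from $(2)$. The reverse implication $(2)\Rightarrow(1)$ is immediate from the definitions, since a uniform bound on the projection of \emph{every} disjoint ball is stronger than a bound on balls of radius at most $b\cdot d(\cdot,\gamma)$; it is worth recording as a sanity check but plays no role in the cycle. Throughout I would replace $\gamma$ by a geodesic representative within bounded Hausdorff distance and work with the CAT(0) nearest-point projection $\pi$, which is distance non-increasing; this reduces all of the statements to estimates on $\pi$.

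The implication $(1)\Rightarrow(2)$ is where I expect the main obstacle to lie, and it is the genuinely CAT(0) step. The hypothesis that $\gamma$ is $(b,c)$-contracting only controls the projection of a ball $B(x,r)$ when its radius is a fixed fraction of $d(x,\gamma)$, whereas strong contraction demands control for \emph{all} balls disjoint from $\gamma$, whose radius may be nearly $d(x,\gamma)$. The idea is a bootstrapping argument exploiting convexity of the CAT(0) metric: given a large ball disjoint from $\gamma$, I would slide its center along the geodesic toward $\pi(x)$ and cover the passage by a geometric sequence of intermediate balls, each of radius $\approx b$ times its own distance to $\gamma$, so that the $(b,c)$ estimate applies to each and convexity guarantees that successive projections move by a summable (geometric) amount. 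The total displacement of $\pi$ on the original ball is then bounded by the sum of a geometric series, producing a uniform constant independent of $r$.

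For $(2)\Rightarrow(3)$ I would run the classical contracting-implies-Morse argument: given a $(K,L)$-quasi-geodesic $\sigma$ with endpoints on $\gamma$ attaining maximal distance $D$ from $\gamma$, I subdivide $\sigma$ into sub-arcs each lying in a ball of radius $\approx D$ disjoint from $\gamma$; by $(2)$ consecutive projections move by at most the contraction constant $C$, so the projection span $\approx d(p,q)$ forces $\mathrm{length}(\sigma)\gtrsim d(p,q)\,D/C$, contradicting the quasi-geodesic length bound unless $D$ is bounded. For $(3)\Rightarrow(4)$ I would pass to an arbitrary asymptotic cone $X_\omega$: the Morse property is scale-invariant with uniform constants, so it survives the ultralimit and shows that any arc in $X_\omega$ joining two points $a,b\in\gamma_\omega$ must coincide with $\gamma_\omega$ on the portion between them; hence every interior point of $\gamma_\omega$ between $a$ and $b$ separates them, as in Dru\c tu--Mozes--Sapir.

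The remaining implication $(4)\Rightarrow(1)$ I would prove by contradiction through a limiting argument. If $\gamma$ fails to be $(b,c)$-contracting for every choice of $b,c$, then there are balls $B(x_n,r_n)$ disjoint from $\gamma$, with $r_n\le b\,d(x_n,\gamma)$, whose projections have diameter tending to infinity; rescaling by these diameters and passing to an asymptotic cone produces two distinct points of $\gamma_\omega$ together with a path joining them that avoids the point of $\gamma_\omega$ lying between their projections, contradicting the cutpoint hypothesis $(4)$. Finally, quadratic divergence follows from $(2)$ by an annulus-counting estimate: a detour $\sigma$ avoiding $B(m,r)$ for $m$ a midpoint must, on the part projecting to within $r/2$ of $m$, stay at distance $\gtrsim r/2$ from $\gamma$; covering that part by balls of radius $\approx r/2$ disjoint from $\gamma$, each moving the projection by at most $C$, one needs $\gtrsim r/C$ of them to sweep a projection span $\approx r$, so $\mathrm{length}(\sigma)\gtrsim r^2/C$, i.e.\ the divergence is at least quadratic.
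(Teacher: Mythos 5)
Your overall architecture differs from the paper's: you run the cycle $(1)\Rightarrow(2)\Rightarrow(3)\Rightarrow(4)\Rightarrow(1)$, placing the CAT(0)-specific work in $(1)\Rightarrow(2)$ and $(4)\Rightarrow(1)$, whereas the paper runs $(2)\Rightarrow(1)\Rightarrow(3)\Rightarrow(4)\Rightarrow(2)$, proving $(1)\Rightarrow(3)$ directly in any geodesic metric space (its Lemma \ref{lem:morse}) and concentrating all the CAT(0) geometry in the single implication $(4)\Rightarrow(2)$. The difference is not cosmetic: the paper's routing exists precisely because the implication $(1)\Rightarrow(2)$ has no known direct proof and is in fact the main payoff of the theorem (Corollary \ref{cor:bcimpstrong}); it is only obtained as a consequence of the full cycle.

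The genuine gap is your proof of $(1)\Rightarrow(2)$. You propose to connect $x$ to a point $y$ with $b\,d(x,\gamma)\le d(x,y)<d(x,\gamma)$ by a chain of intermediate points $z_0=x,z_1,\dots$ along $[x,y]$, each step short enough that $(b,c)$--contraction applies. The triangle inequality only gives $d(z,\gamma)\ge d(x,\gamma)-d(x,z)$, so the admissible step size at the point a distance $t$ along the chain is about $b\bigl(d(x,\gamma)-t\bigr)$, which shrinks geometrically; the number of steps needed to traverse a distance $r$ grows like $\log\bigl(d(x,\gamma)/(d(x,\gamma)-r)\bigr)$ and is unbounded as $r\to d(x,\gamma)$. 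Each step contributes up to the fixed constant $c$ to the projection displacement --- the hypothesis gives no decay of that contribution, and convexity of the CAT(0) metric does not supply one (convexity bounds $d(\cdot,\gamma)$ along $[x,y]$ from above, not from below, and the projection is merely $1$-Lipschitz). So the claimed geometric summability of the projection displacements is unsupported, and the chaining yields only a logarithmic, not uniform, bound. Two smaller issues: your opening reduction replacing $\gamma$ by a nearby geodesic is not available in the direction $(4)\Rightarrow(1)$, since a general quasi-geodesic in a CAT(0) space need not lie near any geodesic until one of (1)--(3) is already known (the paper warns about exactly this); and your sketch of $(4)\Rightarrow(1)$, while the right idea, elides the real difficulty --- after rescaling by the projection diameter $D_r$, the points $x^i_r$ may escape to infinity or collapse, so one must truncate the geodesics $[\pi(x^i_r),x^i_r]$ at a controlled height and use comparison triangles to show the resulting path misses a definite ball about an intermediate point of $\gamma_\omega$; this is the content of the paper's three-case analysis. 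Your $(2)\Rightarrow(3)$, $(3)\Rightarrow(4)$, and the annulus-counting derivation of quadratic divergence from $(2)$ are all sound.
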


Theorem \ref{thm:main} should be considered in the context of related theorems in \cite{bestvina,charney,drutumozessapir,kapovitchleeb}.  Specifically, in \cite{kapovitchleeb} it is shown that periodic geodesics with superlinear divergence have at least quadratic divergence.   In \cite{drutumozessapir} it is shown that properties (3) and (4) in Theorem \ref{thm:main} are equivalent for arbitrary metric spaces.  In \cite{bestvina} it is shown that in proper CAT(0) spaces a geodesic which is the axis of a hyperbolic isometry is strongly contracting if and only if the geodesic fails to bound a half plane.  In \cite{charney} it is shown that geodesics with superlinear lower divergence are equivalent to strongly contracting geodesics.  The proof of Theorem \ref{thm:main} relies on careful applications of CAT(0) geometry and asymptotic cones.  

In \cite{kapovitchleeb,bestvina} it is shown that in proper CAT(0) spaces, periodic geodesics with superlinear divergence in fact have at least quadratic divergence.  Generalizing this result, in \cite{behrstockdrutu} it is shown that in CAT(0) spaces Morse quasi-geodesics have at least quadratic divergence.  As a warmup for Theorem \ref{thm:main}, in this paper we provide an alternative proof of this latter generalization.  To be sure, Theorem \ref{thm:main} itself also provides an alternative proof of the same result.
\begin{repthm} $\textbf{\ref{thm:morsequad}.}$ 
 \cite{behrstockdrutu} \emph{A Morse quasi-geodesic in a CAT(0) space has at least quadratic divergence.}
\end{repthm}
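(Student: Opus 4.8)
The plan is to reduce the statement to a single clean estimate: any path joining two points $\gamma(-r)$ and $\gamma(r)$ of the quasi-geodesic while avoiding the metric ball $B(\gamma(0),\rho)$, with $\rho$ a fixed fraction of $r$, must have length bounded below by a fixed multiple of $r^2$. The central CAT(0) device is nearest-point projection onto a convex set: projecting onto the geodesic segment $\eta_r$ joining the two endpoints is well defined and $1$-Lipschitz, and since $\eta_r$ is a geodesic with endpoints on $\gamma$ the Morse hypothesis forces it to remain within a uniform neighborhood of $\gamma$, so that I may treat the projection $\pi$ as a $1$-Lipschitz map toward $\gamma$ itself. The point is that $1$-Lipschitzness alone only yields a \emph{linear} lower bound, so the real work is to extract a contracting estimate from the Morse property.

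The quantitative heart is precisely such a contracting estimate: there is a constant $C$ so that every ball disjoint from $\gamma$ has $\pi$-image of diameter at most $C$. Granting this, a counting argument finishes the proof. For a detour $\sigma$ avoiding $B(\gamma(0),\rho)$, any point $z$ on $\sigma$ whose projection lies within $\rho/2$ of $\gamma(0)$ satisfies $d(z,\gamma)=d(z,\pi(z))\ge d(z,\gamma(0))-d(\pi(z),\gamma(0))\ge \rho/2$, so the relevant sub-arc of $\sigma$ stays at distance at least $\rho/2$ from $\gamma$. Subdividing this sub-arc into consecutive pieces of length $\rho/2$, each piece lies in a ball disjoint from $\gamma$ and hence moves the projection by at most $C$; since $\pi\circ\sigma$ must sweep out an interval of length on the order of $\rho$ along $\gamma$, the number of pieces is at least of order $\rho/C$, and multiplying by the length $\rho/2$ per piece gives $\mathrm{length}(\sigma)\gtrsim \rho^2/C \sim r^2$.

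The main obstacle is the contracting estimate itself, namely upgrading the qualitative Morse hypothesis to the uniform constant $C$; this is where CAT(0) geometry and asymptotic cones do the genuine work, since convexity alone is too weak (it is consistent with divergence strictly between linear and quadratic, such as $r\log r$). I would establish the estimate by contradiction through asymptotic cones: if disjoint balls had $\pi$-images of unbounded diameter, then rescaling an offending sequence by that diameter and passing to an asymptotic cone $X_\omega$ would produce a convex ball disjoint from the ultralimit $\gamma_\omega$ but shadowing a nondegenerate sub-segment of it, which yields a path joining two distinct points of $\gamma_\omega$ while avoiding the point between them. This contradicts the fact that a Morse quasi-geodesic has, in every asymptotic cone, distinct points of $\gamma_\omega$ separated by a cutpoint (the cutpoint characterization of Morse geodesics for general metric spaces, \cite{drutumozessapir}). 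I expect this cone-based extraction of the uniform constant to be the step demanding the most care; once it is in place, the counting argument above closes the proof.
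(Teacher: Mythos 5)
Your route is genuinely different from the paper's. The paper also starts from the cutpoint characterization of Morse quasi-geodesics (Theorem \ref{thm:cutpoint}), but then passes \emph{directly} from ``cutpoints in every cone'' to the quadratic lower bound via Lemma \ref{lem:morsequad}: assuming a subquadratic detour path $\alpha_r$ of length $\epsilon_r r^2$, a pigeonhole argument produces two points of $\alpha_r$ whose projections to $\gamma$ are exactly $c_r$ apart while the points themselves are only $O(\epsilon_r c_r r)$ apart, and convexity (property C2) alone then assembles a path in the cone $Cone_{\omega}(X,(y^1_r),(c_r))$ witnessing the failure of the cutpoint property. No uniform contraction constant is ever extracted. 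You instead factor the theorem as (Morse $\Rightarrow$ uniform contraction constant $C$) followed by (contraction $\Rightarrow$ quadratic divergence by counting). The second half of your argument is correct and standard, modulo routine care about projecting onto the geodesic $\eta_r$ versus onto $\gamma$ and about the projection sweeping a full interval. But the first half is not something you may treat as a black box: it is precisely the implication $(4)\Rightarrow(2)$ of Theorem \ref{thm:main}, the hardest result in the paper, and it is the step your proposal leaves essentially unproved.

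Moreover, your one-sentence sketch of that step fails as stated. The relevant balls in the negation of contraction have radius equal to $d(x^1_r,\gamma)=A_r$, so their closures meet $\gamma$ at the projection points; since ultralimits of disjoint sets need not be disjoint, the rescaled limit is \emph{not} ``a convex ball disjoint from $\gamma_\omega$'' --- it meets $\gamma_\omega$ at $y^1_\omega$. Worse, nothing in the negation controls $A_r$ against the projection diameter $D_r$: if $A_r/D_r\to\infty$, the center $x^1_r$ escapes to infinity in $Cone_\omega(X,(y^1_r),(D_r))$ and there is no limiting ball at all. This is exactly why the paper truncates the geodesics from $x^i_r$ to $y^i_r$ at parameter scale comparable to $D_r$ and splits into three cases according to the limiting behavior of $D_r/A_r$ and $B_r/D_r$, using Euclidean comparison triangles in each case to certify that the truncated path misses a definite ball centered on $\gamma$ strictly between $y^1_r$ and $y^2_r$. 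So while your architecture is a legitimate alternative (and isolates a reusable statement: strong contraction implies quadratic divergence in any geodesic space), as written it outsources the entire difficulty to a lemma whose proposed proof has a genuine gap; for this particular theorem the paper's direct convexity argument is substantially lighter.
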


Additionally, in this paper we write down an explicit proof of the following generalization of the well known Morse stability lemma.  While the lemma is implicit for instance in \cite{behrstock,drutumozessapir}, there is no recorded proof for the following version of the lemma in the literature.  Accordingly, we include an explicit proof in this paper.
\begin{replem} $\textbf{\ref{lem:morse}.}$
\emph{Let $X$ be a geodesic metric space and $\gamma \subset X$ a (b,c)--contracting quasi-geodesic.  Then $\gamma$ is Morse.  Specifically, if $\sigma$ is a (K,L) quasi-geodesic with endpoints on $\gamma,$ then $d_{Haus}(\gamma,\sigma)$ is uniformly bounded in terms of only the constants $b,c,K,L.$}
\end{replem}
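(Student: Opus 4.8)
The plan is to prove the two inclusions $\sigma\subseteq N_{R}(\gamma)$ and $\gamma_{[a,b]}\subseteq N_{R'}(\sigma)$ separately, where $\gamma_{[a,b]}$ is the subsegment of $\gamma$ between the endpoints $a,b$ of $\sigma$ and $R,R'$ depend only on $b,c,K,L$. After the standard reduction replacing $\sigma$ by a continuous quasi-geodesic at bounded Hausdorff distance (connecting successive integer-parameter points of $\sigma$ by geodesics) I may assume $\sigma\colon[0,\Lambda]\to X$ is continuous, $1$--Lipschitz, and satisfies $\tfrac1K|s-t|-L\le d(\sigma(s),\sigma(t))$. Write $\pi$ for the closest-point projection onto $\gamma$ supplied by the $(b,c)$--contracting hypothesis, so that $d(x,y)\le b\,d(x,\gamma)$ forces $\mathrm{diam}\bigl(\pi(x)\cup\pi(y)\bigr)\le c$.

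The heart of the matter, and where I expect the real work, is the first inclusion. Let $D=\max_{t}d(\sigma(t),\gamma)$, fix the threshold $R:=2cK/b$, and suppose $D>R$. Choosing a maximal parameter interval $[s_1,s_2]$ on which $d(\sigma(\cdot),\gamma)\ge R$, its endpoints $x=\sigma(s_1),\,y=\sigma(s_2)$ satisfy $d(x,\gamma)=d(y,\gamma)=R$. Subdividing $[s_1,s_2]$ into $N=\lceil(s_2-s_1)/(bR)\rceil$ steps of length $bR$ and applying the contracting condition across each step (legitimate, since every point of the excursion is at distance $\ge R$ from $\gamma$, so consecutive points are within $bR\le b\,d(\cdot,\gamma)$) telescopes to $P:=d(\pi(x),\pi(y))\le cN\le \tfrac{c}{bR}(s_2-s_1)+c$. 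On the other hand the quasi-geodesic lower bound gives $s_2-s_1\le K(d(x,y)+L)\le K(2R+P+L)$, using $d(x,y)\le d(x,\pi(x))+P+d(\pi(y),y)=2R+P$. Substituting, the choice $R=2cK/b$ makes the coefficient of $P$ equal to $\tfrac12$, which solves to a bound $P\le P_{\max}$ depending only on $b,c,K,L$. Hence $s_2-s_1\le K(2R+P_{\max}+L)$ is bounded, and since the top $p=\sigma(s_\ast)$ of the excursion satisfies $d(p,x)\le s_\ast-s_1\le s_2-s_1$ by the Lipschitz bound, we get $D=d(p,\gamma)\le d(p,x)+R\le K(2R+P_{\max}+L)+R=:R_0$, a constant in $b,c,K,L$. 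Thus every point of $\sigma$ lies within $R_0$ of $\gamma$.

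For the reverse inclusion I would exploit that the first inclusion forces $\pi$ to vary slowly along $\sigma$: for consecutive integer-parameter points the contracting condition (when both are at distance $\ge 1/b$ from $\gamma$) or a direct triangle-inequality estimate (when they are within $1/b$ of $\gamma$) bounds $d\bigl(\pi(\sigma(i)),\pi(\sigma(i+1))\bigr)$ by a constant $c_0=c_0(b,c)$. The points $\pi(\sigma(0))=a,\dots,\pi(\sigma(\Lambda))=b$ then form a $c_0$--chain in $\gamma$ from $a$ to $b$; since $\gamma$ is a quasi-geodesic, hence coarsely monotone, this chain is coarsely dense in $\gamma_{[a,b]}$. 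Consequently every point of $\gamma_{[a,b]}$ is within a bounded distance of some $\pi(\sigma(i))$, and hence within $R_0+c_0+\mathrm{const}$ of $\sigma$, yielding $R'$.

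The single genuine obstacle is the excursion estimate of the second paragraph: one must bound the height $D$ of an \emph{arbitrary} excursion uniformly, and the mechanism is the tension between the contracting condition, which makes the projection spread $P$ grow at most like $\tfrac{c}{bR}\cdot(\text{length})$, and the quasi-geodesic lower bound, which makes the length grow at most linearly in $R+P$. Choosing $R$ comparable to $cK/b$ is precisely what decouples these estimates and bounds $P$, and thereby $D$, independently of how far $\sigma$ strays. The reduction to continuous $\sigma$ and the reverse inclusion are routine bookkeeping; the only mild subtlety in the latter is that it invokes the coarse monotonicity of $\gamma$ as a quasi-geodesic.
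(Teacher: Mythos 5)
Your proposal is correct and follows essentially the same route as the paper's proof: both isolate a maximal excursion interval on which $d(\sigma(\cdot),\gamma)$ exceeds a threshold chosen comparable to $cK/b$, subdivide it into steps short enough that the $(b,c)$--contracting condition applies to consecutive points, telescope to get a projection spread growing linearly in the excursion length, and play this against the quasi-geodesic inequality to bound the excursion (your $R=2cK/b$ making the coefficient of $P$ equal to $\tfrac12$ is exactly the same mechanism as the paper's choice $A=\tfrac{2(1+cD)}{b}$). The only difference worth noting is that you also sketch the reverse inclusion $\gamma_{[a,b]}\subseteq N_{R'}(\sigma)$ via the $c_0$--chain of projections, which the paper's proof leaves implicit even though the statement asserts a bound on the full Hausdorff distance.
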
 

The proof of Lemma \ref{lem:morse} is based on a similar proof in \cite{algomkfir} dealing with the special case where $\gamma$ is a strongly contracting geodesic.

Moreover, as a corollary of Theorem \ref{thm:main} we highlight the following converse to the aforementioned Morse stability Lemma:
\begin{repcor} $\textbf{\ref{cor:converse}.}$ 
\emph{Let $X$ be a CAT(0) space and $\gamma \subset X$ a Morse quasi-geodesic.  Then $\gamma$ is strongly contracting.}
\end{repcor}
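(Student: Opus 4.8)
The plan is to read this statement off directly from the Main Theorem. The hypothesis that $\gamma$ is Morse is exactly condition (3) of Theorem \ref{thm:main}, and since $X$ is CAT(0) and $\gamma$ is a quasi-geodesic, the hypotheses of that theorem are satisfied. I would therefore invoke the equivalence established there — in particular the implication (3) $\Rightarrow$ (2) — to conclude immediately that $\gamma$ is strongly contracting.

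In this sense there is no genuine obstacle to overcome beyond Theorem \ref{thm:main} itself: all of the substantive work, namely the CAT(0) comparison geometry and the asymptotic cone arguments, is absorbed into proving that the four conditions are equivalent. The only thing that needs checking here is bookkeeping, that the word ``Morse'' in the statement coincides verbatim with condition (3), which it does by definition.

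I would, however, spend a sentence explaining why the statement deserves to be isolated as a corollary. Lemma \ref{lem:morse} shows, in an arbitrary geodesic metric space, that every (b,c)-contracting quasi-geodesic is Morse; this is the implication (1) $\Rightarrow$ (3) and constitutes the usual Morse stability phenomenon. The present corollary supplies the reverse implication (3) $\Rightarrow$ (2) in the CAT(0) setting, and since strongly contracting trivially yields (b,c)-contracting — the implication (2) $\Rightarrow$ (1), also part of the equivalence in Theorem \ref{thm:main} — the three conditions close into a loop. Thus the corollary is precisely the converse to the Morse stability lemma promised in the introduction, now valid whenever the ambient space is CAT(0).
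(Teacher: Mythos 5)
Your proposal is correct and matches the paper exactly: the corollary is stated there as an immediate consequence of Theorem \ref{thm:main}, obtained by reading off the implication $(3)\Rightarrow(2)$, with no additional argument. Your surrounding remarks about how this closes the loop with Lemma \ref{lem:morse} are accurate and consistent with the paper's framing of the result as a converse to the Morse stability lemma.
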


The plan for the paper is as follows.  Section \ref{sec:back} provides background notation, definitions, and results used in the paper.  Section \ref{sec:theorems} includes the proof of Lemma \ref{lem:morse} and Theorems \ref{thm:morsequad} and \ref{thm:main}.  Section \ref{sec:applications} considers applications of Theorem \ref{thm:main} to the study of quasi-geodesics in CAT(0) spaces.  Finally, Section \ref{sec:future} closes with questions for future consideration.
 \subsection*{Acknowledgements}
$\\$
$\indent$
I want to express my gratitude to my advisors Jason Behrstock and Walter Neumann for their extremely helpful advice and insights throughout my research, and specifically regarding this paper.  

\section{Background}\label{sec:back}
\begin{defn} (quasi-geodesic) \label{defn:quasi} A \emph{(K,L) quasi-geodesic} $\gamma \subset X$ is the image of a map $\gamma:I \ra X$ where $I$ is a connected interval in $\R$ (possibly all of $\R$) such that $\forall s,t \in I$ we have the following quasi-isometric inequality:
$$\frac{|s-t|}{K}-L \leq  d_{X}(\gamma(s),\gamma(t)) \leq K|s-t|+L $$ 
We refer to the quasi-geodesic $\gamma(I)$ by $\gamma,$ and when the constants $(K,L)$ are not relevant omit them.   
\end{defn}

An arbitrary quasi-geodesic in any geodesic metric space can be replaced by a continuous rectifiable quasi-geodesic by replacing the quasi-geodesic with a piecewise geodesic path connecting consecutive integer valued parameter points of the original quasi-geodesic.  It is clear that this replacement process yields a continuous rectifiable quasi-geodesic which is in a bounded Hausdorff neighborhood of the original quasi-geodesic.  When doing so will not affect an argument, by replacement if necessary we will assume quasi-geodesics are continuous and rectifiable.  One upshot of the assumption of continuous quasi-geodesics is that the for $\gamma,\sigma$ quasi-geodesics, the distance function $\psi(t)=d(\gamma(t),\sigma)$ is continuous.  More generally, for non-continuous quasi-geodesics this distance function can have jump discontinuities controlled by the constants of the quasi-geodesics.  Throughout, for $\gamma$ any continuous and rectifiable path, we will denote its length by $|\gamma|.$  

The following definition of Morse (quasi-)geodesics has roots in the classical paper \cite{morse}:
\begin{defn}(Morse) A (quasi-)geodesic $\gamma$ is called a \emph{Morse (quasi-)geodesic} if every $(K,L)$-quasi-geodesic with endpoints on $\gamma$ is within a bounded distance from $\gamma,$ with the bound depending only on the constants $K,L.$  In the literature, Morse (quasi-)geodesics are sometimes referred to as \emph{stable quasi-geodesics}.
\end{defn} 

The following generalized notion of contracting quasi-geodesics can be found for example in \cite{behrstock,brockmasur}, and is based on a slightly more general notion of (a,b,c)--contraction found in \cite{mm1} where it serves as a key ingredient in the proof of the hyperbolicity of the curve complex.
\begin{defn} (contracting quasi-geodesics) A (quasi-)geodesic $\gamma$ is said to be \emph{(b,c)--contracting} if $\exists$ constants $0<b\leq 1$ and $0<c$ such that $\forall x,y \in X,$ $$d_{X}(x,y)<bd_{X}(x,\pi_{\gamma}(x))  \implies d_{X}(\pi_{\gamma}(x),\pi_{\gamma}(y))<c.$$
For the special case of a (b,c)--contracting quasi-geodesic where $b$ can be chosen to be $1,$ the quasi-geodesic $\gamma$ is called \emph{strongly contracting.}
\end{defn}  

\begin{defn}(Divergence) Let $\gamma:(-\infty,\infty)\ra X$ be a bi-infinite (quasi-)geodesic in $X.$  The \emph{divergence along $\gamma$} is defined to be the growth rate of the function $$d_{X \setminus B_{r}(\gamma(0))}(\gamma(-r),\gamma(r))$$ with respect to $r,$ where $r \in \N.$  More generally, given a sequence of (quasi-)geodesic segments $\gamma_{n}\subset X$ such that the lengths of the (quasi-)geodesic segments grow proportionally to a linear function, we can similarly define \emph{the divergence along the sequence $\gamma_{n}$} to be the growth rate of the lengths of detour functions for the sequence of (quasi-)geodesics $\gamma_{n}.$  In the literature there are various closely related definitions of divergence, see \cite{drutumozessapir} for details.
\end{defn} 

\subsection{Asymptotic Cones}
Let $(X, d)$ be a geodesic metric space, $\omega$ a non-principal ultrafilter, $(x_{n})$ a sequence of observation points in $X,$ and $(s_{n})$ a sequence of scaling constants such that $\lim_{\omega} s_{n} \ra \infty.$  Then the \emph{asymptotic cone}, $Cone_{\omega}(X,(x_{n}), (s_{n}))$, is the metric space consisting of equivalence classes of sequences $(y_{n})$ satisfying $\lim_{\omega}\frac{d(x_{n},y_{n})}{s_{n}} < \infty,$ where two such sequences $(y_{n})$ and $(y'_{n}),$ represent the same point if and only if $\lim_{\omega}\frac{d(y_{n},y'_{n})}{s_{n}} \ra 0.$  

Given any sequence of subsets $A_{n} \subset X,$ and asymptotic cone $X_{\omega},$ the \emph{ultralimit} $A_{\omega}$ is defined to be the subset of the cone $X_{\omega}$ with representative sequences $(z_{n})$ such that $\{n | z_{n} \in A_{n} \} \in \omega.$  When the choices of scaling constants and base points are not relevant, we denote the asymptotic cone by $X_{\omega}.$  Elements of asymptotic cones will be denoted $x_{\omega}$ with representative sequences denoted $(x_{i}).$   

The following theorem of \cite{drutumozessapir} characterizing Morse geodesics in terms of the asymptotic cone has application in this paper:
\begin{thm}  \label{thm:cutpoint} \cite{drutumozessapir} $\gamma$ is a Morse quasi-geodesic if and only if in every asymptotic cone $X_{\omega},$ every pair of distinct points in the ultralimit $\gamma_{\omega}$ are separated by a cutpoint.
\end{thm}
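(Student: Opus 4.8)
The plan is to prove the two implications separately. I will use two standing facts about an asymptotic cone $X_{\omega}$. First, $X_{\omega}$ is a complete geodesic space, so cut points make sense and any two points are joined by a geodesic. Second, because $\gamma$ is a $(K,L)$-quasi-geodesic, its ultralimit $\gamma_{\omega}$ is a bi-Lipschitz embedded line: the additive constant rescales away, so $\gamma_{\omega}$ is homeomorphic to $\R$ and injective. In particular $\gamma_{\omega}$ has a well-defined subarc $[a_{\omega},b_{\omega}]$ between any two of its points, and any path confined to $\gamma_{\omega}$ joining $a_{\omega}$ to $b_{\omega}$ must pass through every interior point of that subarc. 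The bridge between the two settings is: a quasi-geodesic escaping to distance $D\to\infty$ from $\gamma$ in $X$, rescaled by $D$, produces a bi-Lipschitz arc in some $X_{\omega}$ that leaves $\gamma_{\omega}$ by a definite amount, and conversely.

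For $\neg(\mathrm{Morse})\Rightarrow\neg(\mathrm{cutpoints})$, suppose there are $(K,L)$-quasi-geodesics $\sigma_{n}$ with endpoints on $\gamma$ and $D_{n}:=\sup_{x\in\sigma_{n}}d(x,\gamma)\to\infty$. Pick an apex $x_{n}\in\sigma_{n}$ with $d(x_{n},\gamma)\asymp D_{n}$, a nearest foot $y_{n}\in\gamma$, and let $\tau_{n}$ be the subarc of $\sigma_{n}$ obtained by following $\sigma_{n}$ from $x_{n}$ in each direction until the first return to $\gamma$; thus $\tau_{n}$ meets $\gamma$ only at its two endpoints. Rescaling $X$ by $s_{n}=D_{n}$ based at $y_{n}$, the $\tau_{n}$ are uniform quasi-geodesics, so their ultralimit $\tau_{\omega}$ is a compact bi-Lipschitz arc in $X_{\omega}$ with both endpoints on $\gamma_{\omega}$ and passing through a point $x_{\omega}\notin\gamma_{\omega}$. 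The set $\tau_{\omega}\cap\gamma_{\omega}$ is closed and contains both endpoints of $\tau_{\omega}$, one on each side of $x_{\omega}$, so there are adjacent touch points $c^{-},c^{+}$ immediately before and after $x_{\omega}$ along $\tau_{\omega}$. By injectivity $c^{-}\neq c^{+}$, and the subarc $\tau_{\omega}[c^{-},c^{+}]$ meets $\gamma_{\omega}$ only at $c^{-},c^{+}$. Hence $\tau_{\omega}[c^{-},c^{+}]$ and the $\gamma_{\omega}$-subarc $[c^{-},c^{+}]$ are two arcs sharing only their endpoints: for any $p\neq c^{-},c^{+}$ at least one of them avoids $p$ and joins $c^{-}$ to $c^{+}$ in $X_{\omega}\setminus\{p\}$, so no cut point separates $c^{-}$ from $c^{+}$.

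For the converse $\mathrm{Morse}\Rightarrow\mathrm{cutpoints}$, argue by contradiction: assume $\gamma$ Morse but fix a cone and points $a_{\omega}\neq b_{\omega}\in\gamma_{\omega}$ that are \emph{not} separated by a cut point. Let $m_{\omega}$ be the midpoint of the $\gamma_{\omega}$-subarc $[a_{\omega},b_{\omega}]$, an interior point of that subarc. Since $a_{\omega},b_{\omega}$ are not separated, $m_{\omega}$ in particular does not separate them, so some path $P$ joins $a_{\omega}$ to $b_{\omega}$ in $X_{\omega}\setminus\{m_{\omega}\}$; as a path confined to the line $\gamma_{\omega}$ would be forced through $m_{\omega}$, the path $P$ must leave $\gamma_{\omega}$, giving a point $p_{\omega}\in P$ with $\eta:=d(p_{\omega},\gamma_{\omega})>0$. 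Un-coning the detour $[a_{\omega},p_{\omega}]\cup[p_{\omega},b_{\omega}]$ yields concatenations of two geodesics in $X$ with endpoints on $\gamma$ that reach distance $\asymp\eta\, s_{n}\to\infty$ from $\gamma$. Provided these are uniform quasi-geodesics, they violate the Morse bound, the desired contradiction.

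The main obstacle is exactly the proviso in the last sentence: one must guarantee that the un-coned detour is a genuine quasi-geodesic with constants independent of $n$, rather than a path that folds back and realizes no escape. This really can fail: in an $\R$-tree the detour $[a_{\omega},p_{\omega}]\cup[p_{\omega},b_{\omega}]$ collapses onto $\gamma_{\omega}$ and produces nothing—but in a tree every pair on a line is already separated by a cut point, so this case never arises under $\neg(\mathrm{cutpoint})$. The correct input is the tree-graded structure of asymptotic cones from \cite{drutumozessapir}: two points fail to be separated by a cut point precisely when they lie in a common piece, i.e. a maximal subset containing no cut point, and such a piece of positive diameter supplies the ``fatness'' that forbids the two geodesics from sharing a long initial segment. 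Choosing the apex $p_{\omega}$ inside that piece and invoking this structure is what makes the detour bi-Lipschitz, hence uniformly quasi-geodesic along the sequence. This is the genuine heart of the matter: a detour that merely \emph{avoids the single point} $m_{\omega}$ certifies only superlinear divergence, which is strictly weaker than failure of the Morse property, so the full cut-point-free geometry of the piece must be used. The remaining ingredients—behaviour of quasi-geodesic constants under ultralimits, the elementary nearest-point fact, and closedness of $\tau_{\omega}\cap\gamma_{\omega}$—are routine.
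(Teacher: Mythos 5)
First, a point of reference: the paper does not prove Theorem \ref{thm:cutpoint} at all --- it is imported from \cite{drutumozessapir} and used as a black box --- so your attempt can only be judged against the statement itself. Your architecture is the standard and correct one (two arcs in a cone meeting only at their endpoints certify non-separation; un-coning a detour certifies failure of the Morse property), but each implication has a genuine gap. In the direction $\neg\mathrm{Morse}\Rightarrow\neg\mathrm{cutpoints}$, the unjustified step is the claim that $\tau_{\omega}$ is a \emph{compact} arc with \emph{both endpoints on} $\gamma_{\omega}$: the first-return points of $\sigma_{n}$ to $\gamma$ may lie at distance far exceeding $D_{n}$ from the apex, in which case they escape to infinity after rescaling by $D_{n}$. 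Concretely, take $X=\R^{2}$, $\gamma$ the $x$-axis, and $\sigma_{n}$ the path from $(-n^{2},0)$ up to $(-n^{2},n)$, across to $(n^{2},n)$, and down to $(n^{2},0)$; these are uniform quasi-geodesics with endpoints on $\gamma$ and $D_{n}=n$, yet rescaling by $D_{n}$ gives $\tau_{\omega}=\{y=1\}$, which is disjoint from $\gamma_{\omega}=\{y=0\}$, so there are no touch points $c^{\pm}$ and your argument produces nothing (and no other choice of scale helps: rescaling by $n^{2}$ collapses the escape entirely). The conclusion is of course true in $\R^{2}$, but the case in which $\tau_{\omega}\cap\gamma_{\omega}$ fails to have points on both sides of $x_{\omega}$ is precisely where the work lies, and it is not handled.

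In the direction $\mathrm{Morse}\Rightarrow\mathrm{cutpoints}$ you have correctly located the obstruction yourself: the un-coned concatenation $[a_{\omega},p_{\omega}]\cup[p_{\omega},b_{\omega}]$ is a union of two geodesics that can backtrack completely (the $\R$-tree/tripod picture), so it need not be a quasi-geodesic with constants independent of $n$, and avoiding the single point $m_{\omega}$ only certifies superlinear divergence, which is strictly weaker than failure of Morse. But your proposed repair --- that the tree-graded structure of \cite{drutumozessapir} and the ``fatness'' of the piece containing $a_{\omega}$ and $b_{\omega}$ force the detour to be uniformly bi-Lipschitz --- is a gesture rather than an argument: you do not verify that the relevant cone carries the canonical tree-graded structure with a nondegenerate piece through $a_{\omega}$ and $b_{\omega}$, nor do you derive any quantitative lower bound on $d(u,v)$ for $u\in[a_{\omega},p_{\omega}]$ and $v\in[p_{\omega},b_{\omega}]$ from it. As written, the crux of this implication is labelled a ``proviso'' and then asserted. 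So the proposal is a well-oriented sketch that correctly identifies where the difficulty sits, but one implication rests on a false intermediate claim and the other is left open exactly at its heart; neither constitutes a proof.
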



\subsection{CAT(0) geometry} CAT(0) spaces are metric spaces defined by the property that triangles are no ``fatter'' than the corresponding comparison triangles in Euclidean space.  In particular, using this property one can prove the following lemma, see \cite[Section II.2]{bridson} for details. 

\begin{lem} Let $X$ be a CAT(0) space.
\begin{enumerate} \label{lem:cat}
\item [C1:] (Projections onto convex subsets)  Let $C$ be a convex subset, complete in the induced metric, then there is a well defined distance non-increasing nearest point projection map $\pi_{C}: X \ra C.$  In particular, $\pi_{C}$ is continuous.  In this paper we will be interested in the special case of $C=\gamma$ a geodesic. 
\item [C2:] (Convexity) Let $c_{1}:[0,1] \ra X$ and $c_{2}:[0,1] \ra X$ be any pair of geodesics parameterized proportional to arc length.  Then the following inequality holds for all $t\in [0,1]:$
$$d(c_{1}(t),c_{2}(t)) \leq (1-t) d(c_{1}(0),c_{2}(0)) + td(c_{1}(1),c_{2}(1))  $$
\item[C3:] (Unique geodesic space) For any points $x,y \in X$ there is a unique geodesic segment $\gamma$ connecting $x$ and $y,$ which we denote by $[x,y].$  In particular, CAT(0) spaces are geodesic metric spaces.  
\end{enumerate}
\end{lem}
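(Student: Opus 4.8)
The plan is to derive all three statements C1--C3 from the single defining feature of a CAT(0) space: every geodesic triangle $\Delta$ is thinner than its comparison triangle $\bar\Delta$ in the Euclidean plane $\mathbb{E}^2$, in the sense that $d(x,y) \le d_{\mathbb{E}^2}(\bar x, \bar y)$ for all $x,y$ on the sides of $\Delta$ and their comparison points $\bar x, \bar y$. The only inputs from Euclidean geometry I will need are the convexity of the distance functions $z \mapsto |z-w|$ and the law of cosines (equivalently, the obtuse-angle criterion $a^2 \ge u^2 + D^2$ for a triangle with a non-acute angle). I would establish C3 first, then C2, and finally C1, since the non-expanding property of the projection in C1 is the one place where a genuine angle estimate is required.

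For C3, suppose $c, c' \colon [0,\ell] \to X$ are two geodesics parametrized by arc length with $c(0) = c'(0) = x$ and $c(\ell) = c'(\ell) = y$. I would apply the CAT(0) inequality to the degenerate geodesic triangle with vertices $x, y, y$, whose sides are $c$, the constant geodesic at $y$, and $c'$ traversed backwards. Its comparison triangle in $\mathbb{E}^2$ collapses to a segment $[\bar x, \bar y]$ of length $\ell$, and the comparison points of $c(t)$ and $c'(t)$ both lie at distance $t$ from $\bar x$, hence coincide. The CAT(0) inequality then forces $d(c(t), c'(t)) \le d(\bar{c(t)}, \bar{c'(t)}) = 0$ for every $t$, so $c = c'$.

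For C2, fix $t \in [0,1]$, write $p = c_1(0), q = c_1(1), p' = c_2(0), q' = c_2(1)$, and introduce the diagonal geodesic $\delta = [p, q']$ with its point $w = \delta(t)$, which sits at parameter $t$ from $p$ and at parameter $1-t$ from $q'$. Applying the CAT(0) inequality in the triangle $\Delta(p, q, q')$, the points $c_1(t)$ and $w$ lie at parameter $t$ from the common vertex $p$; their comparison points lie at parameter $t$ along the two sides emanating from $\bar p$, hence are separated by exactly $t\,d(q,q')$, so $d(c_1(t), w) \le t\, d(q, q')$. Applying it again in $\Delta(p', q', p)$, the points $c_2(t)$ and $w$ lie at parameter $1-t$ from the common vertex $q'$, giving $d(w, c_2(t)) \le (1-t)\, d(p, p')$. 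The triangle inequality $d(c_1(t), c_2(t)) \le d(c_1(t), w) + d(w, c_2(t))$ then delivers exactly the asserted bound $(1-t)\, d(c_1(0), c_2(0)) + t\, d(c_1(1), c_2(1))$.

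For C1, existence and uniqueness of the nearest-point projection come from the CAT(0) median inequality $d(x, m)^2 \le \tfrac12 d(x, a)^2 + \tfrac12 d(x, b)^2 - \tfrac14 d(a, b)^2$ for $m$ the midpoint of $a, b$, which itself follows from applying the comparison inequality to the median of a comparison triangle. Taking a minimizing sequence $a_n \in C$ for $d(x, \cdot)$ and using that midpoints stay in $C$ by convexity, this inequality shows the sequence is Cauchy, so completeness of $C$ produces a limit $\pi_C(x)$; the same inequality applied to two minimizers shows the minimizer is unique. The crux, and the step I expect to be the main obstacle, is the non-expanding property $d(\pi_C(x), \pi_C(y)) \le d(x,y)$. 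Here I would first prove the obtuse-angle lemma, that for any $z \in C$ the Alexandrov angle $\angle_{\pi_C(x)}(x, z)$ is at least $\pi/2$: a smaller angle would let one decrease $d(x, \cdot)$ by sliding slightly along $[\pi_C(x), z] \subset C$, contradicting minimality. Applying this at both $\pi_C(x)$ and $\pi_C(y)$ and feeding the two resulting non-acute angles into the CAT(0) comparison for the geodesic quadrilateral $x, \pi_C(x), \pi_C(y), y$ yields $d(\pi_C(x), \pi_C(y)) \le d(x, y)$. Continuity of $\pi_C$ is then immediate from this Lipschitz bound, completing the lemma.
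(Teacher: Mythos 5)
Your proof is correct, and it reconstructs exactly the standard argument: the paper itself offers no proof of this background lemma, citing Bridson--Haefliger (Section II.2), and your three steps --- uniqueness via the degenerate comparison triangle, convexity via the diagonal $[p,q']$ and two triangle comparisons, and the projection via the median (CN) inequality, the obtuse-angle criterion, and the two-right-angles quadrilateral comparison --- are precisely the proofs given in that reference. Nothing is missing beyond routine details (monotonicity of comparison angles and the triangle inequality for Alexandrov angles, which your quadrilateral step implicitly uses), so there is no substantive difference to report.
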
  

Some caution must be taken in considering quasi-geodesics in CAT(0) spaces.  In fact, even in $\R^{2}$ it is easy to construct examples of quasi-geodesics onto which nearest point projections are not even coarsely well defined.  Nonetheless, for the classes of Morse and (b,c)--contracting quasi-geodesics this cannot occur.  The underlying point is that for any two points on a Morse or (b,c)--contracting quasi-geodesic, the unique geodesic connecting the points is contained in a bounded tubular neighborhood of the quasi-geodesic.  For Morse quasi-geodesics, this is part of the definition, while for (b,c)--contracting quasi-geodesics, this follows from Lemma \ref{lem:morse}.  As a consequence, we will see that Morse and (b,c)--contracting bi-infinite quasi-geodesics behave similarly to geodesics.  Throughout, we will be careful to point out when we are dealing with geodesics and quasi-geodesics, respectively.  
\section{Proof of Theorems} \label{sec:theorems}
As a warm up for Theorem \ref{thm:main}, we begin this section by giving an alternative proof of the fact that Morse quasi-geodesics in CAT(0) spaces have at least quadratic divergence.  This result was originally proven in \cite{behrstockdrutu}.  The present alternative proof is inspired by similar methods in \cite{kapovitchleeb} and follows immediately from the following lemma.  For the sake of simplifying the exposition, in Lemma \ref{lem:morsequad} we consider the special case of $\gamma$ a geodesic rather than a quasi-geodesic.  Hence properties in Lemma \ref{lem:cat} can be applied.  Nonetheless, below we will show that the current form of the lemma suffices to prove Theorem \ref{thm:morsequad} concerning quasi-geodesics.  

\begin{lem} \label{lem:morsequad}
Let $X$ be a CAT(0) space, and $\gamma$ a geodesic.  If for every asymptotic cone $X_{\omega},$ any two distinct points in the ultralimit $\gamma_{\omega}$ are separated by a cutpoint, then $\gamma$ has at least quadratic divergence.  Similarly, the same result holds for the case of $\{\gamma_{n}\}$ a sequence of geodesic segments in $X$ with lengths growing proportionally to a linear function.
\end{lem}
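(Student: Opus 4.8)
The plan is to argue by contradiction, combining an asymptotic cone argument (to exploit the cutpoints) with the CAT(0) convexity estimates of Lemma~\ref{lem:cat} (to promote the resulting superlinearity to a genuine quadratic bound, in the spirit of \cite{kapovitchleeb}). Suppose $\gamma$ does not have at least quadratic divergence. Then there is a sequence of scales $r_n \to \infty$ together with detour paths $\sigma_n$ from $\gamma(-r_n)$ to $\gamma(r_n)$, avoiding $B_{r_n}(\gamma(0))$, with $|\sigma_n|/r_n^2 \to 0$. Writing $D(r)$ for the divergence function $d_{X\setminus B_r(\gamma(0))}(\gamma(-r),\gamma(r))$, we have $D(r_n)=o(r_n^2)$.

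First I would rule out linear divergence via the cone. Rescaling by $s_n=r_n$ with observation points $\gamma(0)$, the ultralimit $\gamma_\omega$ is a geodesic through the basepoint $O$, and the points $p_\pm:=\lim_\omega\gamma(\pm r_n)$ lie on $\gamma_\omega$ at distance $1$ from $O$, so $O$ is the midpoint of $[p_-,p_+]$. By hypothesis some cutpoint $m$ separates $p_-$ and $p_+$; since $m$ must lie on every path between them, in particular on the geodesic $[p_-,p_+]$, and $m$ is distinct from its endpoints, we get $d(O,m)<1$. Now if some detour had \emph{bounded} rescaled length, its ultralimit would be a connected rectifiable path from $p_-$ to $p_+$ avoiding the open unit ball about $O$, hence avoiding $m$ — contradicting that $m$ is a cutpoint. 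As $r_n$ was arbitrary this gives $D(r)/r\to\infty$, and repeating the argument with observation points $\gamma(t_n)$ shows that this superlinearity holds \emph{uniformly} for the divergence centered at every point of $\gamma$. (Note that merely bounding $|\sigma_n|$ above by $o(r_n^2)$ is not enough here: the rescaled lengths $|\sigma_n|/r_n\to\infty$, so the ultralimits of the $\sigma_n$ need not be connected.)

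Next I would upgrade uniform superlinearity to a quadratic bound using convexity. By C1 the nearest point projection $\pi_\gamma$ is $1$-Lipschitz, and by C2 the radial projection $\sigma_t(s):=$ the point at parameter $t$ on $[\gamma(0),\sigma_n(s)]$ is a detour at scale $tr_n$ of length at most $t\,|\sigma_n|$, which yields the scale comparison $D(tr)\le t\,D(r)$. The strategy is to combine these projection estimates across the dyadic scales $r_n,r_n/2,r_n/4,\dots$ with the uniform superlinear lower bounds from the previous paragraph, so that $\sigma_n$ is forced to "pay" a comparable amount at each of $\sim r_n$ scales and therefore has total length $\gtrsim r_n^2$, contradicting $|\sigma_n|=o(r_n^2)$. (In the model case where $\gamma$ is strongly contracting this is transparent: subdividing $\sigma_n$ into steps of length $\sim r_n$, each step moves $\pi_\gamma$ by a bounded amount, so $\sim r_n$ steps are needed to sweep the projection across the central segment of $\gamma$, giving length $\gtrsim r_n^2$.)

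I expect this convexity step to be the main obstacle. The difficulty is that C2 in isolation yields only that $D(r)/r$ is nondecreasing — superlinearity is preserved but not improved (for instance $D(r)\sim r\log r$ is entirely consistent with convexity alone) — so the quadratic rate cannot come from convexity by itself and must genuinely use the \emph{uniform} cutpoint information at all basepoints. The technical subtlety is that $\sigma_n$ need not remain uniformly far from $\gamma$: it may dip back toward $\gamma$ at intermediate parameters, so one must localize the estimate to those subpaths of $\sigma_n$ whose $\pi_\gamma$-image traverses a definite portion of $\gamma$ while staying far from it, and then control their overlaps to sum the per-scale costs. The statement for a sequence of geodesic segments $\{\gamma_n\}$, and hence Theorem~\ref{thm:morsequad}, follows by running the identical argument with the observation points and scales adapted to the segments.
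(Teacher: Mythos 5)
Your opening step---taking the cone at scale $r_n$ based at $\gamma(0)$ to rule out linear divergence---is fine, but the proposal has a genuine gap at exactly the place you flag yourself: nothing in it actually produces the quadratic lower bound. The dyadic-scale strategy requires the detour to ``pay'' a definite cost at each of $\sim r_n$ scales, but the only mechanism you offer for such a per-scale cost is the model computation for strongly contracting $\gamma$, and strong contraction is not a hypothesis here (its equivalence with the cutpoint condition is the content of Theorem \ref{thm:main}, which is proved \emph{after} and partly \emph{via} this lemma, so invoking it would be circular). You also correctly observe that convexity alone only gives monotonicity of $D(r)/r$; so with no quantitative lemma converting the cutpoint hypothesis into a cost at a fixed scale, uniform superlinearity at all basepoints is where the argument stops, and that is strictly weaker than quadratic divergence.

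The paper closes this gap with a single intermediate-scale argument rather than a sum over scales, and this is the idea your proposal is missing. Given $|\alpha_r|\le\epsilon_r r^2$ with $\epsilon_r\to 0$, choose $c_r\to\infty$ with $4c_r\le r$ and $c_r^2\epsilon_r\to 0$. Mark points of $\alpha_r$ projecting to $\gamma(-r/2+nc_r)$ for $n=0,\dots,\lfloor r/c_r\rfloor$; by pigeonhole some consecutive pair $x^1_r,x^2_r$ is at distance at most $\tfrac{4}{3}\epsilon_r c_r r$ along $\alpha_r$, while the projections $y^1_r,y^2_r$ are exactly $c_r$ apart. Applying convexity (C2) to the two projection geodesics $\rho^i_r$ (each of length at least $r/2$, since $\alpha_r$ avoids $B_r(\gamma(0))$) at parameter $t=2c_r/|\rho^1_r|$ yields points at height roughly $2c_r$ above $\gamma$ whose mutual distance is at most $c_r+\tfrac{16}{3}c_r^2\epsilon_r\approx c_r$. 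This produces a bypass of $[y^1_r,y^2_r]$ of length about $7c_r$ staying at distance about $c_r$ from that segment, so in $Cone_\omega\bigl(X,(y^1_r),(c_r)\bigr)$---rescaled by $c_r$, not by $r$---the distinct points $y^1_\omega,y^2_\omega\in\gamma_\omega$ are not separated by a cutpoint, contradicting the hypothesis. In short, the contradiction must be extracted at the intermediate scale $c_r$ at which the detour is forced to be efficient, not at the scale $r$ at which it is long; once you see that, the convexity step you anticipated as the main obstacle is a one-line interpolation rather than a multiscale summation.
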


\begin{figure}[htpb]
\centering
\includegraphics[height=4 cm]{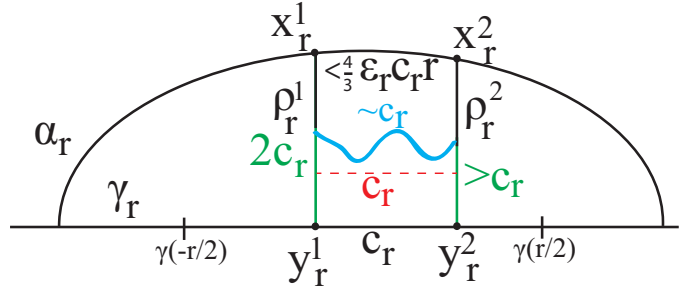}
\caption{In CAT(0) spaces subquadratic divergence implies the existence of an asymptotic cone $X_{\omega}$ in which distinct points in the ultralimit of the geodesic are not separated by a cutpoint.} \label{fig:morse1}
\end{figure}

\begin{proof}
We will prove the first statement in the Lemma.  The similar statement follows by the same argument.

By contradiction.  That is, assume $\gamma$ has subquadratic divergence.  By definition, for each $r\in \N,$ there is a continuous rectifiable detour path $\alpha_{r}$ connecting $\gamma(-r)$ and $\gamma(r)$ while remaining outside the ball $B_{r}(\gamma(0)),$ such that $|\alpha_{r}| \leq \epsilon_{r}r^{2}$ where the function $\epsilon_{r}$ satisfies $\lim_{r \ra \infty} \epsilon_{r} \ra 0.$
Fix a sequence $\{c_{r}\}_{r\in\N}$ such that:
\begin{enumerate}
\item $4c_{r} \leq r,$
\item $\lim_{r \ra \infty} c_{r} \ra \infty,$ and
\item $\lim_{r \ra \infty} c^{2}_{r}\epsilon_{r} \ra 0.$ 
\end{enumerate}
For example, set $c_{r} = \min \{ \epsilon^{-1/3}_{r}, \frac{r}{4} \}.$  

For each $r,$ let $n \in \{0,1,...,\lfloor \frac{r}{c_{r}} \rfloor \},$ and fix $z^{n}_{r} \in \alpha_{r}$ such that $z^{n}_{r} \in \pi^{-1}_{\gamma}(\gamma(-r/2 + nc_{r})).$  Since the total length of $\alpha_{r}$ is at most $ \epsilon_{r}r^{2},$ it follows that for some $m,$ the distance on $\alpha_{r}$ between $z^{m}_{r}$ and $z^{m+1}_{r}$ is at most 

$$\frac{\epsilon_{r}r^{2}}{\lfloor \frac{r}{c_{r}} \rfloor}  \leq \frac{\epsilon_{r}r^{2}}{ \frac{r-c_{r}}{c_{r}}} =  \frac{\epsilon_{r}c_{r}r^{2}}{r-c_{r}}  \leq  \frac{\epsilon_{r}c_{r}r^{2}}{r-\frac{r}{4}} =\frac{4\epsilon_{r}c_{r}r}{3}.$$

Set $x^{1}_{r}=z^{m}_{r},$ $x^{2}_{r}=z^{m+1}_{r},$ and $y^{i}_{r}=\pi_{\gamma}(x^{i}_{r}).$  By construction, $d(x^{1}_{r},x^{2}_{r}) \leq \frac{4}{3}\epsilon_{r}c_{r}r$ while $d(y^{1}_{r},y^{2}_{r})=c_{r}.$  Let $\rho^{i}_{r}:[0,1] \ra X$ be a geodesic parameterized proportional to arc length joining $y^{i}_{r}=\rho^{i}_{r}(0)$ and $x^{i}_{r}=\rho^{i}_{r}(1).$  See Figure \ref{fig:morse1}.  Note that by construction since $y_{r}^{i} \in \gamma[-r/2,r/2]$ and $x_{r}^{i} \in \alpha_{r},$ it follows that 

$$|\rho^{i}_{r}| \geq \frac{r}{2} \geq 2c_{r}.$$  Consider the function $\psi_{r}(t)=d(\rho^{1}_{r}(t),\rho^{2}_{r}(t)).$  Note that $\psi_{r}(0)=c_{r}$ and $\psi_{r}(1) \leq \frac{4}{3}\epsilon_{r}c_{r}r.$  CAT(0) convexity (Lemma \ref{lem:cat} property C2) implies that
$$ \psi_{r}\left(\frac{2c_{r}}{|\rho^{1}_{r}|}\right) \leq  \left(1-\frac{2c_{r}}{|\rho^{1}_{r}|}\right)c_{r} + \frac{8c_{r}}{3|\rho^{1}_{r}|}\epsilon_{r}c_{r}r  \leq c_{r} + \frac{16}{3}c^{2}_{r}\epsilon_{r}.$$
Since  $\lim_{r \ra \infty} c^{2}_{r}\epsilon_{r} \ra 0,$ for large enough $r$ we can assume $d(\rho^{1}_{r}\left(\frac{2c_{r}}{|\rho^{1}_{r}|}\right) ,\rho^{2}_{r}\left(\frac{2c_{r}}{|\rho^{1}_{r}|}\right))$ is arbitrarily close to $c_{r}.$ 

Since $y^{2}_{r}$ is a nearest point projection of $x^{2}_{r}$ onto $\gamma,$ it follows that $|\rho^{2}_{r}| \leq |\rho^{1}_{r}| + \frac{4}{3}\epsilon_{r}c_{r}r.$  Since $\lim_{r \ra \infty} c^{2}_{r}\epsilon_{r} \ra 0$ and $\lim_{r \ra \infty} c_{r} \ra \infty,$ in particular $\lim_{r \ra \infty} c_{r}\epsilon_{r} \ra 0.$  Hence, for sufficiently large $r$ we can assume $c_{r}\epsilon_{r}\leq \frac{3}{8}.$  Then we have the following inequality:
$$|\rho^{2}_{r}| \leq |\rho^{1}_{r}| + \frac{4}{3}\epsilon_{r}c_{r}r \leq |\rho^{1}_{r}| + \frac{1}{2}r \leq |\rho^{1}_{r}| +  |\rho^{1}_{r}| = 2|\rho^{1}_{r}|.$$
Running the same argument with the roles of $\rho^{1}_{r}$ and $ \rho^{2}_{r}$ reversed, it follows that $$ \frac{1}{2}|\rho^{1}_{r}| \leq  |\rho^{2}_{r}| \leq  2|\rho^{1}_{r}|.$$
In particular, $d(y^{2}_{r}, \rho^{2}_{r}\left(\frac{2c_{r}}{|\rho^{1}_{r}|} \right))$ is at most $4c_{r}$ and at least $c_{r}.$      

Putting things together, on the one hand we have a geodesic segment $[y^{1}_{r},y^{2}_{r}] \subset \gamma$ of length $c_{r}.$  While on the other hand we have a piecewise geodesic path $$\sigma_{r } = [y^{1}_{r},\rho^{1}_{r}\left(\frac{2c_{r}}{|\rho^{1}_{r}|}\right) ] \bigcup [\rho^{1}_{r}\left(\frac{2c_{r}}{|\rho^{1}_{r}|}\right),\rho^{2}\left(\frac{2c_{r}}{|\rho^{1}_{r}|}\right)] \bigcup [\left(\frac{2c_{r}}{|\rho^{1}_{r}|}\right),y^{2}_{r}],$$ of total length arbitrarily close to at most $7c_{r}.$  Moreover, note that by construction we can bound from below the distance between the geodesics $[y^{1}_{r},y^{2}_{r}]$ and $[\rho^{1}_{r}\left(\frac{2c_{r}}{|\rho^{1}_{r}|}\right),\rho^{2}_{r}\left(\frac{2c_{r}}{|\rho^{1}_{r}|}\right)].$  Specifically, it follows that the distance $$d([\rho^{1}_{r}\left(\frac{2c_{r}}{|\rho^{1}_{r}|}\right),\rho^{2}_{r}\left(\frac{2c_{r}}{|\rho^{1}_{r}|}\right)],[y^{1}_{r},y^{2}_{r}])$$ 
is at least arbitrarily close to $c_{r}.$  Consider the asymptotic cone $Cone_{\omega}(X,(y^{1}_{r}),(c_{r})).$  In this asymptotic cone, the distinct points $y^{1}_{\omega},y^{2}_{\omega}$ in the ultralimit $\gamma_{\omega}$ are not separated by a cutpoint due to the path $\sigma_{\omega}$ connecting them. This completes the proof.
\end{proof}

Using Lemma \ref{lem:morsequad} in conjunction with Theorem \ref{thm:cutpoint}, proven in \cite{drutumozessapir}, we provide an alternative proof of the following Theorem, originally proven in \cite{behrstockdrutu}:
\begin{thm}
\label{thm:morsequad} \cite{behrstockdrutu} Let $\gamma$ be a Morse quasi-geodesic in a CAT(0) space $X,$ then $\gamma$ has at least quadratic divergence.
\end{thm}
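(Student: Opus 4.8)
The plan is to reduce Theorem~\ref{thm:morsequad} to the already-established Lemma~\ref{lem:morsequad} by passing from the given Morse quasi-geodesic $\gamma$ to an associated genuine geodesic to which the geometric hypotheses of the lemma apply. The principal obstacle is that Lemma~\ref{lem:morsequad} was deliberately stated only for geodesics, so that the CAT(0) convexity and projection tools in Lemma~\ref{lem:cat} may be invoked; a quasi-geodesic need not enjoy these properties, and as the discussion following Lemma~\ref{lem:cat} warns, nearest-point projection onto an arbitrary quasi-geodesic need not even be coarsely well defined. The whole difficulty is therefore to manufacture a geodesic that faithfully represents the large-scale geometry of $\gamma$.

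First I would invoke Theorem~\ref{thm:cutpoint} from \cite{drutumozessapir}, which tells us that since $\gamma$ is Morse, in every asymptotic cone $X_{\omega}$ any two distinct points of the ultralimit $\gamma_{\omega}$ are separated by a cutpoint. This converts the hypothesis ``$\gamma$ is Morse'' into exactly the cutpoint-in-the-cone condition that appears in the hypothesis of Lemma~\ref{lem:morsequad}. The content that remains is purely about divergence, which is a metric notion insensitive to replacing $\gamma$ by a uniformly close subset.

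Next I would pass to a geodesic surrogate. Using the continuous-rectifiable replacement described after Definition~\ref{defn:quasi}, I may assume $\gamma$ is continuous. Because $\gamma$ is Morse, for any pair of parameter values the unique CAT(0) geodesic $[\gamma(s),\gamma(t)]$ stays within a bounded Hausdorff neighborhood of $\gamma$; stringing together such geodesic segments along a linearly spaced sequence of sample points on $\gamma$ produces geodesic segments $\gamma_{n}$ whose lengths grow linearly and which lie uniformly close to $\gamma$. Since the Hausdorff distance between $\gamma_{n}$ and the corresponding piece of $\gamma$ is uniformly bounded, the ultralimit $(\gamma_{n})_{\omega}$ coincides with $\gamma_{\omega}$ in every asymptotic cone, so the cutpoint-separation hypothesis transfers verbatim to the sequence $\{\gamma_{n}\}$. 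This is precisely the data required by the second (sequence-of-geodesic-segments) clause of Lemma~\ref{lem:morsequad}.

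Finally I would apply Lemma~\ref{lem:morsequad} to $\{\gamma_{n}\}$ to conclude that the divergence along this sequence of geodesic segments is at least quadratic, and then transport this conclusion back to $\gamma$. A detour path realizing subquadratic divergence along $\gamma$ would, after adjusting endpoints by the uniformly bounded Hausdorff distance and enlarging the avoided balls by the same bounded amount, yield a detour path of the same asymptotic order along $\gamma_{n}$, contradicting the quadratic lower bound just obtained. Hence $\gamma$ itself has at least quadratic divergence. The only delicate bookkeeping is checking that the bounded error introduced by the geodesic replacement affects neither the linear separation of the endpoints nor the linear radius of the excluded balls to leading order, which is routine since all the perturbations are additive constants independent of $r$.
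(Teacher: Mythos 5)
Your proposal is correct and takes essentially the same route as the paper: it converts the Morse hypothesis into the cutpoint condition via Theorem~\ref{thm:cutpoint}, replaces $\gamma$ by a sequence of geodesic segments lying in a uniformly bounded Hausdorff neighborhood of $\gamma$ (guaranteed by the Morse property), applies the sequence clause of Lemma~\ref{lem:morsequad}, and transfers the quadratic lower bound back across the bounded Hausdorff distance. The only cosmetic difference is that the paper simply takes the single geodesic $[\gamma(-n),\gamma(n)]$ for each $n$ rather than ``stringing together'' sampled segments (which would produce piecewise geodesics, not geodesics, and is unnecessary); with that trivial adjustment your argument matches the paper's.
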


\begin{proof}
Given a Morse quasi-geodesic $\gamma,$ construct a sequence of geodesic segments $\gamma'_{n}$ connecting the points $\gamma(-n)$ and $\gamma(n).$  By the Morse property, all the geodesic segments $\gamma'_{n}$ are contained in a uniformly bounded Hausdorff neighborhood of $\gamma.$  By Theorem \ref{thm:cutpoint}, in any asymptotic cone $X_{\omega},$ any distinct points in $\gamma_{\omega}$ are separated by a cutpoint.  However, since the sequence of geodesics $\gamma'_{n}$ are in a uniformly bounded Hausdorff neighborhood of $\gamma$ it follows that in any asymptotic cone $X_{\omega},$ any distinct points in $\gamma'_{\omega}$ are similarly separated by a cutpoint.  Applying Lemma \ref{lem:morsequad} to the sequence of geodesic segments $\gamma'_{n},$ it follows that the sequence of geodesic segments has quadratic divergence.  However, since the quasi-geodesic $\gamma$ and sequence of geodesic segments $\gamma_{n}'$ are in a bounded Hausdorff neighborhood of each other they have the same order of divergence.
\end{proof}

With the end goal of proving Theorem \ref{thm:main}, presently we write down a proof generalizing the well known Morse stability lemma.  The usual Morse stability lemma states that a strongly contracting geodesic is Morse.  Presently, we show that the stability lemma holds for (b,c)--contracting quasi-geodesics.  To be sure, the proof of Lemma \ref{lem:morse} is closely modeled on the proof of the usual Morse stability lemma, e.g. as in \cite{algomkfir}.  

\begin{lem} \label{lem:morse}
Let $X$ be a geodesic metric space and $\gamma \subset X$ a (b,c)--contracting quasi-geodesic.  Then $\gamma$ is Morse.  Specifically, if $\sigma$ is a (K,L) quasi-geodesic with endpoints on $\gamma,$ then $d_{Haus}(\gamma,\sigma)$ is uniformly bounded in terms of only the constants $b,c,K,L.$
\end{lem}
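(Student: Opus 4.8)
The plan is to show that a $(K,L)$ quasi-geodesic $\sigma$ with endpoints on the $(b,c)$--contracting quasi-geodesic $\gamma$ stays within a bounded Hausdorff neighborhood of $\gamma$, with the bound depending only on $b,c,K,L$. The standard strategy for such stability statements is to bound the maximal excursion of $\sigma$ away from $\gamma$ in terms of itself and then extract a bound. Concretely, I would first replace $\sigma$ by a continuous rectifiable quasi-geodesic (as discussed in the background section, at the cost of adjusting constants), so that the distance function $\psi(t) = d(\sigma(t), \gamma)$ is continuous. Let $D = \sup_t \psi(t)$, attained (or nearly attained) at some point $p = \sigma(t_0)$ on $\sigma$. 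The goal is an inequality of the form $D \leq f(b,c,K,L)$, which bounds one direction of the Hausdorff distance, namely how far $\sigma$ can stray from $\gamma$.

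The key mechanism is to exploit the contraction property along $\sigma$. Starting from the farthest point $p$, I would travel along $\sigma$ in both directions until the distance to $\gamma$ drops to roughly $D/2$ (or until reaching an endpoint, which lies on $\gamma$); call these points $p_-$ and $p_+$. On the subsegment of $\sigma$ between $p$ and $p_+$, every point $x$ satisfies $d(x,\gamma) \geq D/2$, so consecutive points that are within $bd(x,\pi_\gamma(x))$ of each other have projections onto $\gamma$ differing by at most $c$. Partitioning this subsegment into pieces each of diameter less than $b \cdot (D/2)$, the number of pieces is at most roughly $\frac{|\sigma|_{[p,p_+]}}{b D/2}$, and each piece moves the projection by at most $c$. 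This bounds the projection-diameter $d(\pi_\gamma(p_-), \pi_\gamma(p_+))$ linearly in the length of the relevant subsegment of $\sigma$. On the other hand, the quasi-geodesic inequality for $\sigma$ together with the $D/2$ lower bound on distances gives a lower bound on this same length (the subpath must be long since $\sigma$ travels out to distance $D$ and back), producing a \emph{linear-in-$D$} lower bound for the length, hence for the projection distance. Comparing the two sides, if the projection distance grows linearly in $D$ while the contraction estimate caps it, one derives a bound on $D$.

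The main obstacle, and the step I expect to require the most care, is closing the loop between the length estimate and the projection estimate so that $D$ appears with a favorable sign on both sides. The contraction property gives a bound on $d(\pi_\gamma(p_-),\pi_\gamma(p_+))$ of the form $\leq c \cdot (\text{number of subdivisions}) \lesssim \frac{c \cdot \mathrm{length}}{bD}$; simultaneously, $d(\pi_\gamma(p_-),\pi_\gamma(p_+))$ must be \emph{large} because the endpoints $p_\pm$ project near $\gamma$ while $p$ sits at distance $D$, forcing $\sigma$ to have length at least on the order of $D$ between these projection preimages. The delicate point is that projections in a general geodesic metric space need not be well behaved, so I must argue directly using the defining contraction inequality rather than any metric properties of $\pi_\gamma$; in particular, I would avoid assuming $\pi_\gamma$ is coarsely Lipschitz and instead chain the contraction estimate along finely-spaced sample points of $\sigma$. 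Once this chaining is set up correctly, a comparison of the upper and lower bounds yields $D \leq g(b,c,K,L)$ for an explicit $g$; the reverse Hausdorff bound (that $\gamma$ stays near $\sigma$) then follows from connectivity of $\sigma$ and the fact that its endpoints lie on $\gamma$, completing the proof that $\gamma$ is Morse.
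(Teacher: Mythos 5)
Your overall strategy --- chain the contraction inequality along finely spaced sample points of an excursion of $\sigma$ away from $\gamma$, and play the resulting projection estimate off against the quasi-geodesic length estimate --- is exactly the mechanism of the paper's proof (and of the Algom-Kfir argument it is modeled on). However, the specific way you propose to close the loop has a genuine gap. You truncate the excursion where $d(\cdot,\gamma)$ drops to $D/2$ and subdivide into pieces of diameter about $bD/2$, so the contraction estimate gives $d(\pi_{\gamma}(p_{-}),\pi_{\gamma}(p_{+})) \lesssim c\,\ell/(bD)$, where $\ell$ is the length of $\sigma$ between $p_{-}$ and $p_{+}$. You then assert that this projection displacement admits a lower bound linear in $D$ (``a linear-in-$D$ lower bound for the length, hence for the projection distance''). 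That inference is not justified and is false in general: a large length does not force a large projection displacement. The only general lower bound is the triangle inequality $d(\pi_{\gamma}(p_{-}),\pi_{\gamma}(p_{+})) \ge d(p_{-},p_{+}) - d(p_{-},\gamma) - d(p_{+},\gamma) = d(p_{-},p_{+}) - D$, while quasi-geodesicity gives $d(p_{-},p_{+}) \ge \ell/K' - L'$ with $K' \ge 1$, and $\ell$ itself is pinned to order $D$ (it is at least $D$ since $\sigma$ climbs from $D/2$ to $D$ and back, and at most roughly $K'(D + c\ell/(bD)) + L'$). So the contraction estimate caps the projection displacement at a constant independent of $D$, the available lower bound is nonpositive, and the two estimates are consistent for every value of $D$: no bound on $D$ emerges. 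The root of the problem is that your threshold and your subdivision scale both grow linearly with $D$, so the ``gain'' from contraction and the ``cost'' of the excursion scale identically and cancel.

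The paper avoids this by working at a \emph{constant} scale chosen in terms of $b,c,K,L$. It takes a maximal subinterval $[s_{1},s_{2}]$ on which $d(\sigma(s),\gamma) \ge A$ with $A = 2(1+cD')/b$ and $D' = \max\{K,L,1\}$, subdivides it into $m$ arcs of length $Ab/2$, and compares: contraction gives $d(\sigma(s_{1}),\sigma(s_{2})) \le 2(A+L) + c(m+1)$, hence $|\sigma[s_{1},s_{2}]| \le D'(2A+2L+cm+c+1)$ by quasi-geodesicity, while the subdivision gives $|\sigma[s_{1},s_{2}]| \ge mAb/2$. Since $A$ is chosen so that $Ab/2 - cD' = 1 > 0$, the coefficient of $m$ in the lower bound strictly beats the one in the upper bound, which bounds $m$, hence the length of the excursion, hence $d(\sigma(s),\gamma) \le |\sigma[s_{1},s_{2}]| + A + L$ for every $s$ in the excursion. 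If you replace ``distance drops to $D/2$'' by ``distance drops to a suitable constant $A$'' and subdivide at the constant scale $Ab/2$, your argument becomes the paper's and goes through; as written, the comparison does not close.
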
 

\begin{proof}
Since $\gamma$ is (b,c)--contracting, in particular the nearest point projection $\pi_{\gamma}$ is coarsely well defined.  Set $D= \max\{K, L, 1\}$, $A=\frac{2(1+cD)}{b},$ and $R = \max\{d(\gamma,\sigma) \mid t \in \R\}.$  Without loss of generality we can assume $R >A.$  Since we wish to show that $\sigma$ is in a bounded neighborhood of $\gamma,$ by replacement if necessary we can assume $\sigma$ is a continuous rectifiable quasi-geodesic.  

Let $[s_{1}, s_{2}]$ be any maximal connected subinterval in the domain of $\sigma$ such that $\forall s \in [s_{1}, s_{2}],$ we have $d(\sigma(s),\gamma) \geq A.$ Since $\sigma$ is continuous, we can subdivide the interval $[s_{1}, s_{2}]$ such that $s_{1} = r_{1}, . . . , r_{m}, r_{m+1} = s_{2}$ where $|\sigma(r_{i},r_{i+1})| = \frac{Ab}{2}$ for $i \leq m$ and $|\sigma(r_{m},r_{m+1})|  \leq \frac{Ab}{2}.$  Hence, 
\begin{eqnarray}\label{eq1} |\sigma(s1,s2)| \geq \frac{mAb}{2}.
\end{eqnarray}

Fix $P_{i} \in \pi_{\gamma}(\sigma(r_{i})).$ Then since $d(\sigma(r_{i}), P_{i}) \geq A$ and $d(\sigma(r_{i}),\sigma(r_{i+1})) \leq \frac{Ab}{2}<Ab,$ by (b,c)--contraction, $d(P_{i}, P_{i+1}) < c.$  Therefore $d(P_{1}, P_{m+1}) < c(m+1).$   It follows that $$d(\sigma(s_{1}),\sigma(s_{2})) < 2(A+L)+c(m+1).$$  Note that since we are not assuming $\gamma$ is a continuous quasi-geodesic, the distance function $d(\sigma(t),\gamma)$ can have jump discontinuities of $L.$ Using the fact that $\sigma$ is a quasi-geodesic, it follows that 
\begin{eqnarray}
\label{eq2} |\sigma(s1,s2)| \leq D(d(\sigma(s_{1}),\sigma(s_{2})) ) + D\leq D(2A+2L+cm+c+1).
\end{eqnarray}

Combining inequalities \ref{eq1} and \ref{eq2}, after some manipulation we obtain 

$$ m < \frac{D(2A+2L+c+1)}{\frac{Ab}{2}-cD} = D(2A+2L+c+1).$$

Thus, $\forall s \in [s_{1}, s_{2}]$ we have the following inequality:
\begin{eqnarray*}
d(\sigma(s),\gamma) &\leq& d(\sigma(s),\sigma(s_{2})) + d(\sigma(s_{2}),\gamma) \\
&\leq& |\sigma[s_{1},s_{2}]| + A +L\\
&\leq&  D(2A+2L+cm+c+1) +A +L \\
&<&  D(2A+2L+c\left(D(2A+2L+c+1)\right)+c+1) +A +L \\
\end{eqnarray*}
Since the constants $A,D$ are defined in terms of the constants $b,c,K,L,$ the lemma follows. 
\end{proof}

Using Lemma \ref{lem:morse}, we will prove the following theorem:

\begin{thm}\label{thm:main}
Let $X$ be a CAT(0) space and $\gamma \subset X$ a (K,L)--quasi-geodesic.  Then the following are equivalent:
\begin{enumerate}
\item $\gamma$ is (b,c)--contracting
\item $\gamma$ is (1,c)--contracting, (or strongly contracting)
\item $\gamma$ is Morse, and
\item In every asymptotic cone $X_{\omega},$ any two distinct points in the ultralimit $\gamma_{\omega}$ are separated by a cutpoint. 
\end{enumerate}
In particular, any of the properties listed above implies that $\gamma$ has at least quadratic divergence.
\end{thm}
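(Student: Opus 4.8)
My plan is to prove the four statements equivalent by closing a cycle of implications, most of which are already in hand, and then to read off the divergence bound. The implication $(2)\Rightarrow(1)$ is immediate, since a strongly contracting quasi-geodesic is by definition $(b,c)$--contracting with $b=1$. The implication $(1)\Rightarrow(3)$ is exactly Lemma \ref{lem:morse}, and the equivalence $(3)\Leftrightarrow(4)$ is Theorem \ref{thm:cutpoint}. Thus the entire theorem reduces to the single implication $(4)\Rightarrow(2)$, after which the cycle $(2)\Rightarrow(1)\Rightarrow(3)\Leftrightarrow(4)\Rightarrow(2)$ gives all four equivalences. The concluding ``in particular'' clause then follows from $(4)$: since $(4)\Leftrightarrow(3)$, the quasi-geodesic $\gamma$ is Morse, so exactly as in the proof of Theorem \ref{thm:morsequad} I replace $\gamma$ by the geodesic segments $\gamma'_n=[\gamma(-n),\gamma(n)]$, which lie in a uniform neighborhood of $\gamma$ and hence inherit property $(4)$; Lemma \ref{lem:morsequad} gives at least quadratic divergence for $\{\gamma'_n\}$, and therefore for $\gamma$.

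For the remaining implication $(4)\Rightarrow(2)$ I would argue by contraposition: assuming $\gamma$ is not strongly contracting, I will produce a single asymptotic cone in which two distinct points of $\gamma_{\omega}$ are not separated by a cutpoint, contradicting $(4)$. Negating strong contraction gives, for each $n$, points $u_n,v_n$ with $d(u_n,v_n)<\rho_n:=d(u_n,\pi_{\gamma}(u_n))$ and $\Delta_n:=d(\pi_{\gamma}(u_n),\pi_{\gamma}(v_n))\to\infty$; write $P_n=\pi_{\gamma}(u_n)$ and $Q_n=\pi_{\gamma}(v_n)$. The geometric picture is a detour: the concatenation of the CAT(0) geodesics $[P_n,u_n]$, $[u_n,v_n]$, $[v_n,Q_n]$ joins $P_n$ to $Q_n$ while bulging away from $\gamma$. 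After rescaling by $\Delta_n$ and basing the cone at $P_n$, this detour should survive as a path $\sigma_{\omega}$ joining the distinct points $P_{\omega},Q_{\omega}\in\gamma_{\omega}$. If $\sigma_{\omega}$ meets $\gamma_{\omega}$ only at its two endpoints, then no single point lies on every path from $P_{\omega}$ to $Q_{\omega}$, so the two points are not separated by a cutpoint. This is the same mechanism used in the proof of Lemma \ref{lem:morsequad}.

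The hard part is obtaining, uniformly in $n$, two estimates that make this detour behave well in the limit. First (scale): the detour must have length comparable to $\Delta_n$, whereas a priori one only gets $\rho_n>\Delta_n/4$, so $\rho_n$ could far exceed $\Delta_n$. I would fix this by capping the vertical legs at height $\asymp\Delta_n$, replacing $u_n$ by the point $u_n^{\ast}$ on $[P_n,u_n]$ at distance $\min(\rho_n,\Delta_n)$ from $P_n$ (and $v_n$ similarly). The elementary observation that $P_n$ realizes $d(u_n,\gamma)$ gives, for every $m\in\gamma$, $d(u_n^{\ast},m)\ge d(u_n,m)-d(u_n,u_n^{\ast})\ge\rho_n-(\rho_n-\Delta_n)=\Delta_n$, so $u_n^{\ast}$ still has distance $\Delta_n$ from $\gamma$ with nearest point $P_n$; CAT(0) convexity (Lemma \ref{lem:cat}, C2) then bounds the capped horizontal leg $[u_n^{\ast},v_n^{\ast}]$ by $O(\Delta_n)$. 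Second (separation): $\sigma_{\omega}$ must stay distance $\gtrsim\Delta_n$ from the interior of $\gamma_{\omega}$. The vertical legs are controlled by the projection inequality above together with the CAT(0) right-angle condition at $P_n$ and $Q_n$, which keeps them far from all points of $\gamma$ that are bounded away from the endpoints $P_n,Q_n$.

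The genuine obstacle, and the step I expect to be the crux, is keeping the horizontal leg $[u_n^{\ast},v_n^{\ast}]$ off the middle of $\gamma$: convexity of the distance to the convex set $[P_n,Q_n]$ bounds this distance from above but not below, so a single bad pair yields a ``fat'' quadrilateral whose top can swing down toward $\gamma$. The resolution, as in Lemma \ref{lem:morsequad}, is to make the configuration \emph{thin and tall} by subdividing the detour: one chooses intermediate points whose projections advance along $\gamma$ in small increments $c_n\ll\Delta_n$ while sitting at height $\gtrsim 2c_n$, so that a horizontal leg of length $\approx c_n$ cannot reach a shoulder lying at height $2c_n$. Here the Morse hypothesis, available since $(4)\Rightarrow(3)$, is essential: it confines $\gamma$ between $P_n$ and $Q_n$ to a uniformly bounded neighborhood of the geodesic $[P_n,Q_n]$, pinning the relevant part of $\gamma$ to low height while the shoulders sit high, which is precisely what lets the convexity estimates deliver the lower bound on $d(\sigma_{\omega},\gamma_{\omega})$. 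Once both uniform bounds are in place, passing to $Cone_{\omega}(X,(P_n),(\Delta_n))$ produces the required bigon and contradicts $(4)$, closing the cycle.
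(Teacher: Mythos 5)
Your reduction to $(4)\Rightarrow(2)$ and the overall mechanism you adopt for that implication (negate strong contraction, build the detour $[P_n,u_n]\cup[u_n,v_n]\cup[v_n,Q_n]$, truncate the vertical legs at height $\asymp\Delta_n$, pass to $Cone_{\omega}(X,(P_n),(\Delta_n))$, and exhibit a bigon) agree with the paper, and your capping estimate is sound. But the step you yourself flag as the crux --- a lower bound of order $\Delta_n$ on the distance from the horizontal leg $[u_n^{\ast},v_n^{\ast}]$ to the interior of $\gamma$ --- is not actually supplied by what you propose, and this is a genuine gap. The pigeonhole subdivision in Lemma \ref{lem:morsequad} does produce consecutive points at mutual distance $\lesssim c_n$ whose projections are $c_n$ apart, but the reason those points sit at height $\geq r/2\geq 2c_r$ in that lemma is that the detour path is \emph{required by the definition of divergence} to avoid the ball $B_r(\gamma(0))$. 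There is no analogous a priori height bound for points of $[u_n^{\ast},v_n^{\ast}]$: that lower bound is exactly what you are trying to prove, so invoking the subdivision mechanism here is circular. Likewise the Morse property (legitimately available via $(4)\Leftrightarrow(3)$) pins $\gamma$ near the geodesic $[P_n,Q_n]$, but, as you yourself observe, convexity only bounds the distance from the top to $[P_n,Q_n]$ from \emph{above}; neither ingredient rules out the top dipping to height $o(\Delta_n)$ over the middle of $[P_n,Q_n]$.

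There is also a degenerate regime your ``thin and tall'' picture cannot accommodate: nothing prevents $B_n=d(v_n,\pi_{\gamma}(v_n))$ from being $o(\Delta_n)$, in which case $v_n^{\ast}$ lies essentially on $\gamma$, the quadrilateral collapses to a triangle, and the horizontal leg genuinely descends to $\gamma$ at one end. The paper confronts both problems with a three-case analysis on the asymptotic ratios $D_r/A_r$ and $B_r/D_r$. Rather than proving a pointwise height lower bound, it uses CAT(0) convexity and Euclidean comparison triangles to show that the connecting geodesic (for instance $[\rho^1_r(t),\rho^2_r(t)]$, or $[\rho^1_r(t),y^2_r]$ in the degenerate case) has length strictly smaller than that of any path forced to pass through small balls $B(z^i_r,\delta D_r)$ centered at interior points of $\gamma$; the geodesic must therefore miss one such ball, and that ball furnishes the non-separated pair of points of $\gamma_{\omega}$ in the cone. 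Some version of this length-budget computation, or another genuine substitute for the missing height bound, is needed to complete your argument; a minor additional point is that since $\gamma$ is only a quasi-geodesic, $\pi_{\gamma}$ is neither single-valued nor continuous, so the points $P_n,Q_n$ and the interior points $z^i_r$ must be chosen with the care the paper exercises.
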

\begin{rem} Theorem \ref{thm:main} stated for CAT(0) spaces also holds for \emph{rough CAT(0) spaces} or \emph{rCAT(0) spaces}, as defined in \cite{roughcat}.  rCAT(0) spaces are defined similarly to CAT(0) spaces, however in place of requiring that triangles are no fatter than corresponding comparison triangles in Euclidean space, we relax this inequality to allow for a fixed additive error.  In particular, the class of rCAT(0) spaces strictly includes CAT(0) spaces and all Gromov-hyperbolic spaces.  In fact, since in \cite{roughcat} it is shown that rCAT(0) spaces satisfy a coarse version of property [C3] in Lemma \ref{lem:cat}, modulo appropriate coarse modifications, the proof of Theorem \ref{thm:main} carries through for rCAT(0) spaces.
\end{rem}

\begin{figure}[htpb] 
\centering
\includegraphics[height=4 cm]{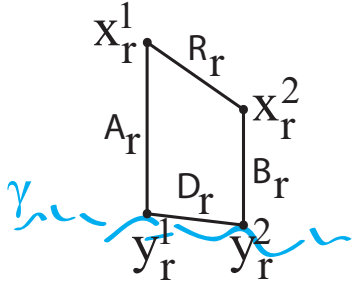}
\caption{In a CAT(0) space, assuming a quasi-geodesic $\gamma$ is not (1,c)--contracting implies it is not Morse.}\label{fig:1ccont}
\end{figure}

\begin{proof}
$\\$
\indent
$(2) \implies (1)$: This follows immediately from the definitions.  $(1)\implies(3)$: This is precisely Lemma \ref{lem:morse}.  $(3)\implies(4)$: This is precisely Theorem \ref{thm:cutpoint}, proven in \cite{drutumozessapir}.
$\\$
\indent
In the remainder of the proof we will prove $(4)\implies(2)$: By contradiction.  That is, assuming $\gamma$ is not (1,c)--contracting we will show that there is an asymptotic cone $X_{\omega}$ such that distinct points in the ultralimit $\gamma_{\omega}$ are not separated by a cutpoint.  Since $\gamma$ is not (1,c)--contracting, it follows that for all $r\in \N,$ we can make the following choices satisfying the stated conditions:
\begin{enumerate}
\item [(i)] Fix points $x^{1}_{r} \in X \setminus \gamma,$ and $y^{1}_{r} \in \pi_{\gamma}(x^{1}_{r})$ such that $d(x^{1}_{r},y^{1}_{r})=A_{r}$ and
\item [(ii)] Fix points $x^{2}_{r} \in X \setminus \gamma,$ and a point $y^{2}_{r} \in \pi_{\gamma}(x^{2}_{r})$ such that $d(x^{1}_{r},x^{2}_{r})=R_{r} < A_{r},$ and  $d(y^{1}_{r},y^{2}_{r}) = D_{r},$ for some $D_{r}\geq r.$  Set $B_{r}=d(x^{2}_{r},y^{2}_{r}).$  
\end{enumerate}
Let $\rho^{i}_{r}:[0,1] \ra X$ be a geodesic parameterized proportional to arc length joining $y^{i}_{r}=\rho^{i}_{r}(0)$ and $x^{i}_{r}=\rho^{i}_{r}(1).$  See Figure \ref{fig:1ccont} for an illustration of the situation.  Note we are not assuming the nearest point projection maps $\pi_{\gamma}$ are even coarsely well defined, but instead are simply picking elements of the set of nearest points subject to certain restrictions guaranteed by the negation of (1,c)--contraction.  In fact, we cannot have assumed that $y^{2}_{r}$ could have been chosen such that $d(y^{1}_{r},y^{2}_{r}) = r,$ as the nearest point projection map onto quasi-geodesics need not be continuous.  Moreover, it is possible that $x^{1}_{r}$ and $x^{2}_{r}$ are even the same point.

Since $d(y^{1}_{r},y^{2}_{r}) = D_{r},$ it follows that $A_{r}+R_{r}+B_{r} \geq D_{r}.$  Moreover, since $R_{r} < A_{r}$ and $B_{r} \leq R_{r}+ A_{r},$ it follows that $A_{r} > \frac{D_{r}}{4}.$  Fix $t=\frac{D_{r}}{4A_{r}} \in (0,1).$  Additionally, since $B_{r} < 2A_{r}$ it follows that $|[y^{2}_{r},\rho^{2}_{r}(t)]| < \frac{D_{r}}{2}.$
  
Since $A_{r} > D_{r}/4,$ the ratio $\frac{D_{r}}{A_{r}} \in (0,4),$ and hence there exists some subsequence such that $\frac{D_{r}}{A_{r}}$ converges.
$\\$
\textbf{Case 1:} There exists some subsequence such that $\frac{D_{r}}{A_{r}} \ra \epsilon \ne 0.$
$\\$
\begin{figure}[htpb] 
\centering
\includegraphics[height=4 cm]{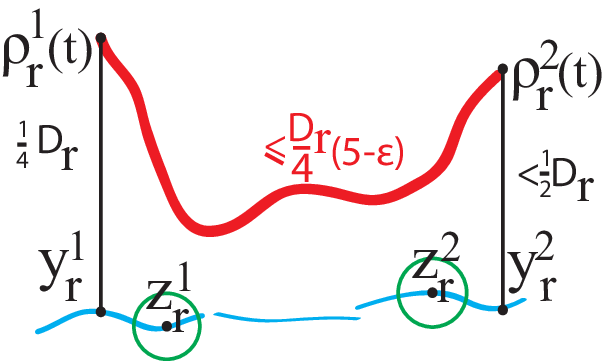}
\caption{Case (1) of the proof of Theorem \ref{thm:main}.}\label{fig:case1}
\end{figure}

\indent
CAT(0) convexity  (Lemma \ref{lem:cat} property C2) applied to the geodesics $\rho^{i}_{r}$ implies:
\begin{eqnarray*} \label{eq:convex}
d(\rho^{1}_{r}(t),\rho^{2}_{r}(t)) &\leq&  \left(1-\frac{D_{r}}{4A_{r}} \right) D_{r} + \frac{D_{r}R_{r}}{4A_{r}} \\
&\leq& D_{r} -\frac{D^{2}_{r}}{4A_{r}} D_{r} + \frac{D_{r}}{4} \leq   \frac{D_{r}}{4} \left(5- \frac{D_{r}}{A_{r}} \right).
\end{eqnarray*}

For large enough values of $r$ in the convergent subsequence, it follows that $d(\rho^{1}_{r}(t),\rho^{2}_{r}(t))$ is arbitrarily close to $\frac{D_{r}}{4} \left(5-\epsilon \right).$

Let $z^{1}_{r}$ be a point on $\gamma$ between $y^{1}_{r}$ and $y^{2}_{r}$ such that $d(y^{1}_{r},z^{1}_{r})$ is in the range $[\frac{\epsilon D_{r}}{28}, \frac{\epsilon D_{r}}{28}+L].$  Similarly, let $z^{2}_{r}$ be a point on $\gamma$ between $y^{1}_{r}$ and $y^{2}_{r}$ such that $d(y^{2}_{r},z^{2}_{r})$ is in the range $[\frac{\epsilon D_{r}}{28}, \frac{\epsilon D_{r}}{28}+L].$   Since $\rho^{i}_{r}$ are geodesics minimizing the distance from a fixed point to $\gamma,$ it follows that $\rho^{i}_{r}$ are disjoint from the interiors of the metric balls $B(z^{i}_{r}, \frac{\epsilon D_{r}}{56}).$

Moreover, by construction, for large enough values of $r$ in the convergence subsequence, the geodesic $[\rho^{1}_{r}(t),\rho^{2}_{r}(t)]$ is disjoint from either the metric ball $B(z^{1}_{r}, \frac{\epsilon D_{r}}{56})$ or the metric ball $B(z^{2}_{r}, \frac{\epsilon D_{r}}{56}).$  For if not, then 
\begin{eqnarray*}
|[\rho^{1}_{r}(t),\rho^{2}_{r}(t)]| &\geq& d(\rho^{1}_{r}(t),\{B(z^{1}_{r},\frac{\epsilon D_{r}}{56}),B(z^{2}_{r},\frac{\epsilon D_{r}}{56}) \}) + d(B(z^{1}_{r},\frac{\epsilon D_{r}}{56}),B(z^{2}_{r},\frac{\epsilon D_{r}}{56})) \\
&\geq& \left( \frac{D_{r}}{4}- \frac{\epsilon D_{r}}{56} \right)+ \left( D_{r} -6\frac{\epsilon D_{r}}{56} \right) = \frac{D_{r}}{4}\left( 5-\frac{\epsilon}{2} \right).
 \end{eqnarray*}
However, this contradicts the fact that $d(\rho^{1}_{r}(t),\rho^{2}_{r}(t))$ is arbitrarily close to $\frac{D_{r}}{4} \left(5-\epsilon \right).$  On the other hand, if for large enough values of $r$ in the convergence subsequence, the geodesic $[\rho^{1}_{r}(t),\rho^{2}_{r}(t)]$ is disjoint from the metric ball $B(z^{i}_{r}, \frac{\epsilon D_{r}}{56}),$ then we will construct an asymptotic cone in which distinct points on $\gamma_{\omega}$ are not separated by a cutpoint, thus completing the proof in this case.  

Specifically, let $\omega$ be a non-principal ultrafilter such that the set of values of $r$ in the convergence subsequence are an element of $\omega.$  Then consider the asymptotic cone $Cone_{\omega}(X,(y^{1}_{r}),(D_{r})).$  In this asymptotic cone, the points $v^{\pm}_{\omega}$ in the intersection of $\gamma_{\omega}$ and the metric ball $B(z^{i}_{\omega}, \frac{\epsilon D_{r}}{56}) $ are not separated by a cutpoint due to the existence of a path $[v^{+}_{\omega},z^{i}_{\omega}] \cup [z^{i}_{\omega},v^{-}_{\omega}]$ connecting them in the interior of the ball $B(z^{i}_{r},\frac{\epsilon D_{r}}{56}),$ as well as the path connecting them outside the ball $B(z^{i}_{r},\frac{\epsilon D_{r}}{56})$ given by the union of paths $$[v^{-}_{\omega},y^{1}_{\omega}] \cup [y^{1}_{\omega},\rho^{1}_{\omega}(t)] \cup [\rho^{1}_{\omega}(t),\rho^{2}_{\omega}(t)] \cup [\rho^{2}_{\omega}(t),y^{2}_{\omega}] \cup [y^{2}_{\omega},v^{+}_{\omega}].$$  See figure \ref{fig:case1} for an illustration of the proof in Case (1). 

\textbf{Case 2:} There exists some subsequence such that  $\frac{B_{r}}{D_{r}} \ra 0.$
$\\$
\begin{figure}[htpb] 
\centering
\includegraphics[height=5 cm]{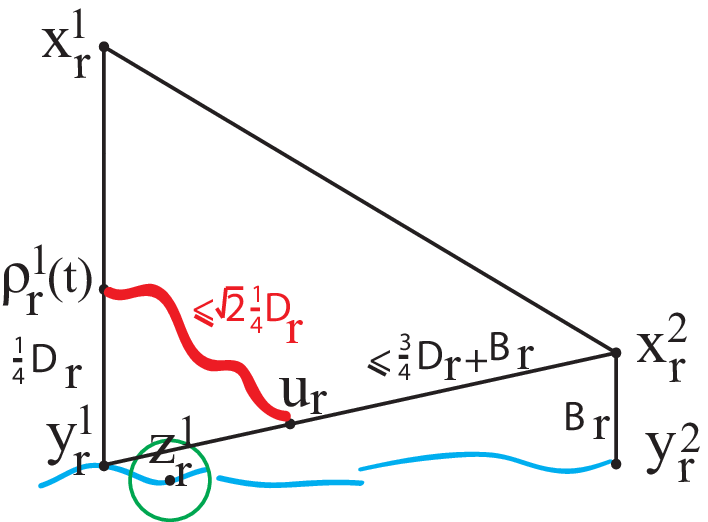}
\caption{Case (2) of the proof of Theorem \ref{thm:main}.}\label{fig:case4}
\end{figure}

Let $\sigma_{r}:[0,1] \ra X$ be a geodesic parameterized proportional to arc length joining $y^{1}_{r}=\sigma_{r}(0)$ and $x^{2}_{r}=\sigma_{r}(1).$  By the triangle inequality, $|\sigma_{r}|$ is in the range $[D_{r}-B_{r},D_{r}+B_{r}].$  

Consider the triangle in $X$ with vertices $(y^{1}_{r},x^{2}_{r},x^{1}_{r}),$ and let the comparison triangle in Euclidean space have vertices $(\overline{y^{1}_{r}},\overline{x^{2}_{r}},\overline{x^{1}_{r}}),$  Since $R_{r} < A_{r},$ it follows that the angle between the sides $[\overline{x^{2}_{r}},\overline{y^{1}_{r}}]$ and $[\overline{x^{1}_{r}}, \overline{y^{1}_{r}}],$ is less than $\frac{\pi}{2}.$  Let $\overline{u}_{r}$ denote the point in $[\overline{y^{1}_{r}}, \overline{x^{2}_{r}}],$ such that $d(\overline{y^{1}_{r}},\overline{u_{r}})=\frac{D_{r}}{4}.$  Elementary Euclidean trigonometry implies that $d(\overline{\rho^{1}_{r}(t)},\overline{u_{r}}) < \frac{\sqrt{2}D_{r}}{4}.$  Hence, by the CAT(0) property, it follows that $d(\rho^{1}_{r}(t),u_{r})< \sqrt{2} \epsilon D_{r}.$ 

Note that $d(u_{r},x^{2}_{r})\leq \frac{3 D_{r}}{4} +B_{r},$ and hence $d(u_{r},y^{2}_{r})\leq \frac{3 D_{r}}{4} +2B_{r}.$  Putting things together, it follows that $d(\rho^{1}_{r}(t),y^{2}_{r})<  \frac{\sqrt{2}D_{r}}{4}+\frac{3 D_{r}}{4} +2B_{r}.$  

As in Case (1), let $z^{1}_{r}$ be a point on $\gamma$ between $y^{1}_{r}$ and $y^{2}_{r}$ such that $d(y^{1}_{r},z^{1}_{r})$ is in the range $[\frac{(2-\sqrt{2}) D_{r}}{16}, \frac{(2-\sqrt{2}) D_{r}}{16}+L].$  Again as in Case (1), note that $\rho^{1}_{r}$ is disjoint from the interior of the metric balls $B(z^{1}_{r}, \frac{(2-\sqrt{2}) D_{r}}{32}).$  

Furthermore, for large enough values of $r$ in the convergence subsequence, the geodesic $[\rho^{1}_{r}(t),y^{2}_{r}]$ is also disjoint from the metric ball $B(z^{1}_{r},\frac{(2-\sqrt{2}) D_{r}}{32}).$  For if not, then 
\begin{eqnarray*}
|[\rho^{1}_{r}(t),y^{2}_{r}]| &\geq& d(\rho^{1}_{r}(t), B(z^{1}_{r},\frac{(2-\sqrt{2})D_{r}}{32}) ) + d(B(z^{1}_{r},\frac{(2-\sqrt{2}) D_{r}}{32}), y^{2}_{r}) \\
&\geq& \left( \frac{D_{r}}{4}- \frac{(2-\sqrt{2}) D_{r}}{32} \right)+ \left( D_{r} -3\frac{(2-\sqrt{2}) D_{r}}{32}\right) \geq D_{r} + \frac{\sqrt{2}D_{r}}{8}.
\end{eqnarray*}

However, in conjunction with the assumption of the case, this contradicts the fact that $d(\rho^{1}_{r}(t),y^{2}_{r})$ is at most $\frac{3D_{r}}{4}+ \frac{\sqrt{2}D_{r}}{4} +2B_{r}.$  On the other hand, if for large enough values of $r$ the geodesic $[\rho^{1}_{r}(t),y^{2}_{r}]$ is disjoint from the metric ball $B(z^{1}_{r}, \frac{(2-\sqrt{2})\epsilon D_{r}}{32}),$ then as in Case (1), in the asymptotic cone $Cone_{\omega}(X,(y^{1}_{r}),(D_{r}))$ we can find distinct points on $\gamma_{\omega}$ that are not separated by a cutpoint.  This completes the proof in Case (2).  See figure \ref{fig:case4} for an illustration of the proof in Case (2). 
$\\$
\textbf{Case 3:} We are not in Cases (1) or (2): 
$\\$
\indent
Since we are not in Case (2), by passing to a subsequence if necessary we can assume that the ratio $\frac{B_{r}}{D_{r}}$ either converges to $\epsilon' >0$ or diverges to infinity.  In the former case, set $\epsilon=\min(\frac{1}{4},\epsilon'),$ and in the latter case set $\epsilon=\frac{1}{4}.$  Set $s=\frac{\epsilon D_{r}}{A_{r}}.$  By construction $s \in (0,1).$   

Let $\tau_{r}:[0,1] \ra X$ be a geodesic parameterized proportional to arc length joining $x^{2}_{r}=\tau_{r}(0)$ and $x^{1}_{r}=\tau_{r}(1).$  Similarly, let $\sigma_{r}:[0,1] \ra X$ be a geodesic parameterized proportional to arc length joining $y^{2}_{r}=\sigma_{r}(0)$ and $x^{1}_{r}=\sigma_{r}(1).$  By construction, $|\sigma_{r}|$ is in the range $[A_{r},A_{r}+D_{r}].$  Since we are not in Case (1), it follows that $|[\sigma_{r}(0),\sigma_{r}(s)]|$ is arbitrarily close to $\epsilon D_{r}.$  Moreover, CAT(0) convexity (Lemma \ref{lem:cat} property C2) applied to the geodesics $\rho^{1}_{r}$ and $\sigma_{r}$ immediately implies $d(\rho^{1}_{r}(s),\sigma_{r}(s))$ is bounded above by $D_{r}.$

Consider the triangle in $X$ with vertices $(x^{1}_{r},x^{2}_{r},y^{2}_{r}),$ and let the comparison triangle in Euclidean space have vertices $(\overline{x^{1}_{r}},\overline{x^{2}_{r}},\overline{y^{2}_{r}}),$  As in Case (1), since $R_{r} < A_{r},$ it follows that the angle between the sides $[\overline{x^{1}_{r}},\overline{y^{2}_{r}}]$ and $[\overline{x^{2}_{r}}, \overline{y^{2}_{r}}],$ is less than $\frac{\pi}{2}.$  Let $\overline{w_{r}}$ denote the point in $[\overline{y^{2}_{r}}, \overline{x^{2}_{r}}],$ such that $d(\overline{y^{2}_{r}},\overline{w_{r}})=\epsilon D_{r}.$  Note that since $|[\sigma_{r}(0),\sigma_{r}(s)]|$ is arbitrarily close to $\epsilon D_{r},$ elementary Euclidean trigonometry implies that $d(\overline{\sigma_{r}(s)},\overline{w_{r}})$ is at most arbitrarily close to $\sqrt{2} \epsilon D_{r}.$  Hence, by the CAT(0) property, it follows that $d(\sigma_{r}(s),w_{r})$ is at most arbitrarily close to $\sqrt{2} \epsilon D_{r}.$  Putting things together, it follows that $d(\rho^{1}_{r}(s),w_{r})$ is at most arbitrarily close to $D_{r}+ \sqrt{2}\epsilon D_{r}.$  

As in Case (1), let $z^{1}_{r}$ be a point on $\gamma$ between $y^{1}_{r}$ and $y^{2}_{r}$ such that $d(y^{1}_{r},z^{1}_{r})$ is in the range $[\frac{(2-\sqrt{2})\epsilon D_{r}}{16}, \frac{(2-\sqrt{2})\epsilon D_{r}}{16}+L].$  Similarly, let $z^{2}_{r}$ be a point on $\gamma$ between $y^{1}_{r}$ and $y^{2}_{r}$ such that $d(y^{2}_{r},z^{2}_{r})$ is in the range $[\frac{(2-\sqrt{2})\epsilon D_{r}}{16}, \frac{(2-\sqrt{2})\epsilon D_{r}}{16}+L].$  For large enough values of $r$ in the convergence subsequence, the geodesic $[\rho^{1}_{r}(s),w_{r}]$ is disjoint from either the metric ball $B(z^{1}_{r},\frac{(2-\sqrt{2})\epsilon D_{r}}{32})$ or the metric ball  $B(z^{2}_{r},\frac{(2-\sqrt{2})\epsilon D_{r}}{32}).$  For if not, then 
\begin{eqnarray*}
|[\rho^{1}_{r}(s),w_{r}]| &\geq& d(\rho^{1}_{r}(t), \{B(z^{1}_{r},\frac{(2-\sqrt{2})\epsilon D_{r}}{32}), B(z^{2}_{r},\frac{(2-\sqrt{2})\epsilon D_{r}}{32})\}) \\
&+& d(B(z^{1}_{r},\frac{(2-\sqrt{2})\epsilon D_{r}}{32}), B(z^{2}_{r},\frac{(2-\sqrt{2})\epsilon D_{r}}{32})) \\
&+& d(w_{r},\{B(z^{1}_{r},\frac{(2-\sqrt{2})\epsilon D_{r}}{32}), B(z^{2}_{r},\frac{(2-\sqrt{2})\epsilon D_{r}}{32})\})\\
&\geq& \left( \epsilon D_{r}- \frac{(2-\sqrt{2})\epsilon D_{r}}{32} \right)+ \left( D_{r} -6\frac{(2-\sqrt{2})\epsilon D_{r}}{32}\right) +  \left( \epsilon D_{r}- \frac{(2-\sqrt{2})\epsilon D_{r}}{32} \right) \\
 &>& D_{r} +  \frac{3\epsilon D_{r}}{2}.  \end{eqnarray*}

However, this is a contradiction to the fact that $d(\rho^{1}_{r}(s),w_{r})$ is at most arbitrarily close to $D_{r} + \sqrt{2}\epsilon D_{r}.$  On the other hand, if for large enough values of $r$ in the convergence subsequence, the geodesic $[\rho^{1}_{r}(s),w_{r}]$ is disjoint from the metric ball $B(z^{i}_{r}, \frac{(2-\sqrt{2})\epsilon D_{r}}{32}),$ then as in Case (1), the asymptotic cone $Cone_{\omega}(X,(y^{1}_{r}),(D_{r}))$ contains distinct points of $\gamma_{\omega}$ not separated by a cutpoint, thereby completing the proof in the final case and hence completing the proof of $(4) \implies (2).$  See figure \ref{fig:case3} for an illustration of the proof in Case (3). 
 
\begin{figure}[htpb] 
\centering
\includegraphics[height=5 cm]{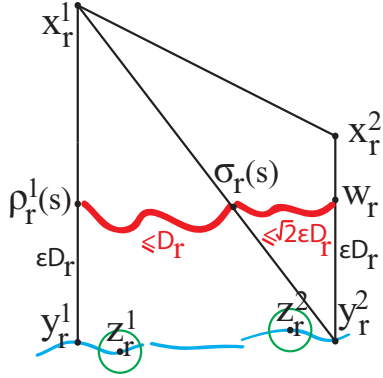}
\caption{Case (3) of the proof of Theorem \ref{thm:main}.}\label{fig:case3}
\end{figure}

Finally, the ``in particular'' clause of the theorem follows from Theorem \ref{thm:morsequad}.  
 \end{proof}
 
\section{Applications of Theorem \ref{thm:main}}
\label{sec:applications}
In this section we organize some applications of the Theorem \ref{thm:main}.  First, as an immediate consequence of Theorem \ref{thm:main}, we highlight the following which provides a converse to the usual Morse stability Lemma for CAT(0) spaces: 
\begin{cor} \label{cor:converse} 
Let $X$ be a CAT(0) space and $\gamma \subset X$ a Morse quasi-geodesic.  Then $\gamma$ is strongly contracting.
\end{cor}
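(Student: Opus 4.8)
The plan is to read the result off directly from Theorem \ref{thm:main}. The hypotheses align perfectly: $X$ is assumed CAT(0) and $\gamma$ is a quasi-geodesic, so the theorem applies. The assumption that $\gamma$ is Morse is exactly condition (3) of that theorem, while the desired conclusion that $\gamma$ is strongly contracting is exactly condition (2), since by definition ``strongly contracting'' means (1,c)--contracting. Thus the corollary is nothing more than the implication $(3) \implies (2)$ extracted from the four-fold equivalence.

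To make the logic transparent, I would trace which implications inside the proof of Theorem \ref{thm:main} are being invoked. Beginning from the Morse hypothesis (condition (3)), the implication $(3) \implies (4)$, which is Theorem \ref{thm:cutpoint} of Dru\c tu--Mozes--Sapir, yields that in every asymptotic cone $X_{\omega}$ any two distinct points of $\gamma_{\omega}$ are separated by a cutpoint. Feeding this into the implication $(4) \implies (2)$---the substantial case analysis carried out in the body of the proof of Theorem \ref{thm:main}---then upgrades the cutpoint property to (1,c)--contraction, which is the claimed conclusion.

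I do not anticipate any genuine obstacle: the entire content of the corollary is already contained in Theorem \ref{thm:main}, and in particular in the hard $(4) \implies (2)$ direction. The only role of the corollary is to repackage the equivalence of conditions (2) and (3) in the classical language of the Morse stability lemma, thereby exhibiting it as a converse to Lemma \ref{lem:morse}, which supplies the forward direction (strongly contracting $\implies$ Morse). Consequently the proof amounts to a one-line citation of the theorem, with no further argument required.
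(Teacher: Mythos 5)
Your proposal is correct and matches the paper exactly: the corollary is stated there as an immediate consequence of Theorem \ref{thm:main}, namely the implication $(3)\implies(2)$, with no further argument given. Your additional tracing of the chain $(3)\implies(4)\implies(2)$ through the proof of the theorem is accurate and consistent with how the paper's proof of the equivalence is organized.
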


The completion of Teichm\"uller space equipped with the Weil Petersson metric, $\mathcal{\overline{T}}_{WP}(S),$ is a CAT(0) metric space which has been an object of interest in recent years especially within the circle of ideas surrounding the resolution of Thurston's Ending Lamination Conjecture.  As $\mathcal{\overline{T}}_{WP}(S)$ is CAT(0), Theorem \ref{thm:main} has application to the study of quasi-geodesics in this space.  Specifically, in \cite{behrstock,bmm,sultanfinest} among others, families of quasi-geodesics with various of the hyperbolic type properties recorded in Theorem \ref{thm:main} are studied.  In particular, in \cite{behrstock}, it is shown that for $\gamma$ a quasi-geodesic in $\mathcal{\overline{T}}_{WP}(S)$ with \emph{bounded combinatorics} (see \cite{bmm} for the definition), any distinct points in the ultralimit of the quasi-geodesic in any asymptotic cone are separated by a cutpoint.  On the other hand, in \cite{bmm} it is shown that quasi-geodesics in $\mathcal{\overline{T}}_{WP}(S)$ with bounded combinatorics are (b,c)--contracting.  Similarly, implicitly in \cite{behrstock} as well as in \cite{sultanfinest} it is shown that a more general class of geodesics generalizing those with bounded combinatorics also has the property that any distinct points in the ultralimit of the quasi-geodesics in any asymptotic cone are separated by a cutpoint.   Putting things together, in conjunction with Theorem \ref{thm:main}, we have the following corollary strengthening the aforementioned results in \cite{behrstock,bmm}:

\begin{cor}
Let $\gamma$ be a quasi-geodesic in $\mathcal{\overline{T}}_{WP}(S)$ with bounded combinatorics (or more generally the generalization of bounded combinatorics studied in \cite{sultanfinest}), then $\gamma$ is strongly contracting.
\end{cor}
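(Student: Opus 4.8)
The plan is to deduce this corollary directly from Theorem \ref{thm:main} by combining it with the known behavior of bounded combinatorics quasi-geodesics established in the cited literature. First I would note that the Weil--Petersson completion $\mathcal{\overline{T}}_{WP}(S)$ is a CAT(0) metric space, so that Theorem \ref{thm:main} applies to any quasi-geodesic $\gamma \subset \mathcal{\overline{T}}_{WP}(S)$; in particular, for such a $\gamma$ the four conditions (1)--(4) are equivalent, and any one of them forces $\gamma$ to be strongly contracting.

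Next, the strategy is to verify that $\gamma$ satisfies condition (4) of Theorem \ref{thm:main}, namely that in every asymptotic cone of $\mathcal{\overline{T}}_{WP}(S)$ any two distinct points of the ultralimit $\gamma_{\omega}$ are separated by a cutpoint. For $\gamma$ with bounded combinatorics this is precisely the content of the result of \cite{behrstock}, and for the more general class studied in \cite{sultanfinest} the same cutpoint property holds by the (implicit) results there. Having established condition (4), I would simply invoke the implication $(4)\Rightarrow(2)$ of Theorem \ref{thm:main} to conclude that $\gamma$ is strongly contracting.

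As an alternative route, one could instead feed in condition (1): by \cite{bmm} quasi-geodesics with bounded combinatorics are (b,c)--contracting, and then the implication $(1)\Rightarrow(2)$ of Theorem \ref{thm:main} yields the same conclusion. Either path reduces the corollary to a direct application of the equivalences proven in Theorem \ref{thm:main}.

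Since the substantive analytic work---the CAT(0) geometry and the asymptotic cone constructions---is entirely contained in the proof of Theorem \ref{thm:main}, and the cutpoint (\resp contracting) property is supplied verbatim by the cited papers, the only real task remaining is bookkeeping: matching the hypotheses of the results in \cite{behrstock,bmm,sultanfinest} to the conditions of Theorem \ref{thm:main}. I expect the main (and essentially only) subtlety to be confirming that the generalization of bounded combinatorics from \cite{sultanfinest} genuinely produces the cutpoint separation demanded by condition (4), rather than merely a weaker separation property; once this identification is granted, the corollary is immediate.
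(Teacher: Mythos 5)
Your proposal is correct and matches the paper's own argument essentially verbatim: the paper also notes that $\mathcal{\overline{T}}_{WP}(S)$ is CAT(0), invokes \cite{behrstock} (and \cite{sultanfinest} for the generalization) to supply the cutpoint property (4), cites \cite{bmm} for the alternative (b,c)--contracting input (1), and then applies the equivalences of Theorem \ref{thm:main}. Nothing further is needed.
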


Note that \cite{bestvina} proves the special case of Corollary \ref{cor:bcimpstrong} where $\gamma$ is a psuedo-Anosov axis in $\mathcal{\overline{T}}_{WP}(S).$  More generally, as a special case of Theorem \ref{thm:main} the following corollary can be used for proving that a quasi-geodesic in a CAT(0) space is strongly contracting:
\begin{cor} \label{cor:bcimpstrong}
Let $\gamma$ be a quasi-geodesic in a CAT(0) space.  If for some constant $0<b\leq 1, \; c>0$, the quasi-geodesic $\gamma$ is (b,c)--contracting, then $\gamma$ is strongly contracting.
\end{cor}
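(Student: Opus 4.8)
The plan is to read this corollary directly off Theorem \ref{thm:main}. The hypothesis that $\gamma$ is (b,c)--contracting for some $0 < b \leq 1$ and $c > 0$ is precisely condition (1) of that theorem, while the desired conclusion that $\gamma$ is strongly contracting is precisely condition (2), since by definition a strongly contracting quasi-geodesic is exactly a (1,c)--contracting one. As Theorem \ref{thm:main} asserts the equivalence of conditions (1)--(4) for any quasi-geodesic in a CAT(0) space, in particular the implication $(1) \implies (2)$ holds, and this is exactly the statement of the corollary.

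To make the logical dependence explicit, I would trace the chain of implications used to establish $(1) \implies (2)$ within the proof of Theorem \ref{thm:main}. First, $(1) \implies (3)$ is supplied by the generalized Morse stability Lemma \ref{lem:morse}, which shows that every (b,c)--contracting quasi-geodesic in a geodesic metric space is Morse. Next, $(3) \implies (4)$ is the characterization of Morse quasi-geodesics via cutpoints in asymptotic cones, namely Theorem \ref{thm:cutpoint} of Dru\c tu--Mozes--Sapir. Finally, $(4) \implies (2)$ is the CAT(0)--specific implication established by the three-case contradiction argument in the body of the proof of Theorem \ref{thm:main}, which rests on CAT(0) convexity (Lemma \ref{lem:cat}, property C2) together with comparison-triangle estimates. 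Composing these three implications yields $(1) \implies (2)$, as required.

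Since all of the genuine work has already been carried out in proving Theorem \ref{thm:main}, no new obstacle remains; the role of the corollary is simply to isolate and name the single implication $(1) \implies (2)$ in a form directly usable for verifying that a given quasi-geodesic is strongly contracting. I would note, however, that the substantive step being invoked is the upgrade from (b,c)--contraction with an arbitrary $b$ to (1,c)--contraction, which is precisely the $(4) \implies (2)$ direction and constitutes the heart of the paper; the contribution here is only to package that result as a practical criterion.
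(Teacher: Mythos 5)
Your proposal is correct and matches the paper exactly: Corollary \ref{cor:bcimpstrong} is stated there as an immediate special case of Theorem \ref{thm:main}, namely the implication $(1)\implies(2)$, with no further argument given. Your tracing of that implication through the chain $(1)\implies(3)\implies(4)\implies(2)$ accurately reflects how the theorem's proof is organized.
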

  
\section{Questions}
\label{sec:future}
In light of Theorem \ref{thm:main}, we pose the following natural question:
\begin{qu} \label{qu:converse}
For $X$ a CAT(0) space and $\gamma \subset X$ a quasi-geodesic, under what conditions is the property of $\gamma$ having at least quadratic divergence equivalent to the other four properties in Theorem \ref{thm:main}?  Equivalently, when does the converse of Lemma \ref{lem:morsequad} hold?    
\end{qu}
If one considers the union of two copies of $R^{n}$ joined at a single point, it is easy find geodesics in such a CAT(0) space which have at least quadratic, in fact infinite, divergence, yet are not Morse.  On the other hand, if $\gamma$ is periodic then having quadratic divergence, in fact even superlinear divergence, by homogeneity implies that in every asymptotic cone any distinct points in the ultralimit $\gamma_{\omega}$ are separated by a cutpoint, \cite{drutumozessapir}.  

Furthermore, in light of Lemma \ref{lem:morse}, we pose the following question:
\begin{qu} \label{qu:cubic} As in Lemma \ref{lem:morse} asymptotic cones can be used to provide conditions for determining when a quasi-geodesic has at least quadratic divergence.  Can asymptotic cones be used to determine cubic divergence or any higher degree of polynomial divergence?
\end{qu}

Taking steps toward answering Question \ref{qu:cubic}, in \cite{sultanfinest} the author uses asymptotic cones to prove that the Teichm\"uller space equipped with the Weil Petersson metric of the once punctured genus two surface, $\mathcal{\overline{T}}_{WP}(S_{2,1}),$ has superquadratic divergence.

\bibliographystyle{amsalpha}

\end{document}